\documentclass[11pt,leqno]{article}
\usepackage{mathrsfs,amssymb,amsmath,amsthm, color}
\usepackage[dvips]{graphicx}

\makeatletter
	
	\@addtoreset{equation}{section}
\makeatother

\setlength{\topmargin}{-1.5cm}
\setlength{\oddsidemargin}{0.5cm}
\setlength{\evensidemargin}{0cm}
\setlength{\textheight}{23.7cm}
\setlength{\textwidth}{15cm}
\setlength{\footskip}{1.5cm}

\begin{document}

\renewcommand{\theenumi}{\rm (\roman{enumi})}
\renewcommand{\labelenumi}{\rm \theenumi}

\newtheorem{thm}{Theorem}[section]
\newtheorem{defi}[thm]{Definition}
\newtheorem{lem}[thm]{Lemma}
\newtheorem{prop}[thm]{Proposition}
\newtheorem{cor}[thm]{Corollary}
\newtheorem{exam}[thm]{Example}
\newtheorem{conj}[thm]{Conjecture}
\newtheorem{rem}[thm]{Remark}
\allowdisplaybreaks

\title{H\"older and Lipschitz continuity of the solutions to parabolic equations of the non-divergence type}
\author{Seiichiro Kusuoka\footnote{e-mail: kusuoka@okayama-u.ac.jp}
\vspace{0.1in}\\
\normalsize Research Institute for Interdisciplinary Science, Okayama University\\
\normalsize 3-1-1 Tsushima-naka, Kita-ku Okayama 700-8530, Japan}
\maketitle

\begin{abstract}
We consider time-inhomogeneous, second order linear parabolic partial differential equations of the non-divergence type, and assume the ellipticity and the continuity on the coefficient of the second order derivatives and the boundedness on all coefficients.
Under the assumptions we show the H\"older continuity of the solution in the spatial component.
Furthermore, additionally assuming the Dini continuity of the coefficient of the second order derivative, we have the better continuity of the solution.
In the proof, we use a probabilistic method, in particular the coupling method.
As a corollary, under an additional assumption we obtain the H\"older and Lipschitz continuity of the fundamental solution in the spatial component.
\end{abstract}

{\bf 2010 AMS Classification Numbers:} 35B65, 35K10, 60H10, 60H30, 60J60.

 \vskip0.2cm

{\bf Key words:} parabolic partial differential equation, diffusion, fundamental solution, H\"older continuous, Lipschitz continuous, stochastic differential equation, coupling method.

\section{Introduction and main result}\label{section:intro}

Let $a(t,x)=(a_{ij}(t,x))$ be a symmetric $d\times d$-matrix-valued bounded measurable function on $[0,\infty ) \times {\mathbb R}^d$ which is uniformly positive definite, i.e.
\begin{equation}\label{ass:a1}
\Lambda ^{-1} I \leq a(t,x) \leq \Lambda I
\end{equation}
where $\Lambda$ is a positive constant and $I$ is the unit matrix.
Moreover, we assume the continuity of $a(t,\cdot )$ uniformly in $t$, i.e. for $R>0$ there exists a continuous and nondecreasing function $\rho _R$ on $[0,\infty )$ such that $\rho _R (0) =0$ and
\begin{equation}\label{ass:a2}
\sup _{t\in [0,\infty)} \max _{i,j=1,2,\dots, d} \left| a_{ij}(t,x) - a_{ij}(t,y)\right| \leq \rho _R (|x-y|) ,\quad x,y\in B(0;R).
\end{equation}
Let $b(t,x)=(b_i(t,x))$ be an ${\mathbb R}^d$-valued bounded measurable function on $[0,\infty ) \times {\mathbb R}^d$, and $c(t,x)$ be a bounded measurable function on $[0,\infty ) \times {\mathbb R}^d$.
Consider the following parabolic partial differential equation of the non-divergence type:
\begin{equation}\label{PDE}
\left\{ \begin{array}{rl}
\displaystyle \frac{\partial}{\partial t} u(t,x) & \displaystyle = \frac 12 \sum _{i,j=1}^d a_{ij}(t,x)\frac{\partial ^2}{\partial x_i \partial x_j} u(t,x) +  \sum _{i=1}^d b_i(t,x) \frac{\partial }{\partial x_i} u(t,x) + c(t,x) u(t,x) \!\!\!\!\!\!\!\!\!\! \\[3mm]
\displaystyle u(0,x)& =f(x).\displaystyle 
\end{array}\right.
\end{equation}
The existence and the uniqueness of the mild solution to (\ref{PDE}) are obtained under the assumptions (\ref{ass:a1}) and (\ref{ass:a2}) (see \cite{StVa}).
In the present paper, we consider the regularity in the spatial component of the solution and the fundamental solution to (\ref{PDE}).

It is well-known that the solution and the fundamental solution has the regularity according to the regularity of the coefficients $a$, $b$ and $c$.
When $a$, $b$ and $c$ are not sufficiently smooth, the argument to obtain the regularity of the solution is completely different from the case of the sufficiently smooth coefficients.
In the case that $a$, $b$ and $c$ are H\"older continuous, parametrix method is the standard way to see the regularity of the solutions and the fundamental solutions (see \cite{Fr} and \cite{LSU}).
The method enables us to construct the fundamental solution directly.
Furthermore, in the case of H\"older continuous coefficients, the Schauder estimate is known for the solutions to the parabolic equations, and as the consequence, we have  $u(t,\cdot) \in C^2({\mathbb R}^d)$ (see e.g. \cite{Kr}).

When all of the coefficients are independent of the time component and continuous in the spatial component, the method of analytic semigroups are available (see \cite{Ste} and \cite{Lu}).
As the result, we obtain that the solution belongs to $C^{1}\cap W_{\rm loc}^{2,p}$ for $p>d$ (see Theorem 6 in \cite{Ste}).
The continuities of the coefficients of the lower-order derivative terms are assumed in Theorem 6 in \cite{Ste} and Chapter 5 in \cite{Lu}.
However, the continuities can be removed (see the remarks in \cite{Ste} and the footnote at page 69 in \cite{Lu}).
The method is applicable to the case of time-dependent coefficients.
However, for the application we need that all of the coefficients belong to $C^{0,\alpha}([0,\infty)\times {\mathbb R}^d)$ for $\alpha >0$ (the space of the functions which are H\"older continuous in the spatial component uniformly in the time component).
The reason is that when we apply the method to the case of time-dependent coefficients, we have to estimate the variation of the coefficients in time (see Theorem 5.1.16 in \cite{Lu} and Theorem 7 in \cite{Ste}).

The case that $a$ is uniformly positively definite and bounded, $a(t,\cdot )$ is continuous uniformly in $t$, $b$ is bounded and measurable, and $c=0$, is studied by Stroock and Varadhan \cite{StVa1,StVa2} (the results are summarized in \cite{StVa}).
Under the setting, they obtained the uniqueness of the mild solution.
It is also known that; if we remove the continuity of $a$, then the uniqueness does not hold (see \cite{Sa}).
Moreover, Stroock and Varadhan obtained the existence of the fundamental solution $p(0,x;t,y)$ for almost every $t$.
On the other hand, even if $a$ is uniformly continuous and $b=0$, there is an example that the fundamental solution does not exist for a certain $t$ (see \cite{FaKe}).
We remark that the case that $c$ is bounded measurable is reduced to the case that $c=0$ by the Feynman-Kac formula.

The equation in which we are interested in the present paper is of the non-divergence type, but we also comment on the case of the equation of the divergence type, i.e. the case that in (\ref{PDE}) the term $\frac 12 \sum _{i,j=1}^d a_{ij}(t,x)\frac{\partial ^2}{\partial x_i \partial x_j}$ is replaced by $\frac 12 \sum _{i,j=1}^d \frac{\partial}{\partial x_i} \left( a_{ij}(t,x) \frac{\partial}{\partial x_j} \right)$.
In the case of the equation of the divergence type, we can apply the variational method, and many results have been obtained.
Nash \cite{Na} and Di Giorgi \cite{DG} independently proved that; when $a$ is uniformly positively definite and bounded, $b=0$ and $c=0$, then the solution exists and is H\"older continuous.
After that, Moser \cite{Mo} showed the Harnack inequality to the solution, and obtained the H\"older continuity of the solution as the consequence of the Harnack inequality. Later, Aronson \cite{Ar} generalized the results to the case that $b$ and $c$ are bounded measurable and obtained the Gaussian bounds of the fundamental solution.
These results are summarized in \cite{St}.

In the present paper, we consider the H\"older continuity and the Lipschitz continuity in $x$ of the solution $u(t,x)$ to (\ref{PDE}) under the assumptions (\ref{ass:a1}) and the continuity of $a(t,\cdot)$ uniformly in $t$.
As mentioned above, the uniqueness of the mild solution is obtained from Stroock and Varadhan's result (see \cite{StVa}) and the Feynman-Kac formula.
We prepare the Markov process $X$ associated with the parabolic equation which consists of the coefficients $a$ and $b$ (see (\ref{SDEX})), and obtain the H\"older continuity of $u(t,\cdot )$ with a constant depending on the transition probability measure $p^X$ of $X$ (see \ref{thm:main1} of Theorem \ref{thm:main}).
In the case that $a(t,\cdot)$ is locally Dini continuous uniformly in $t$ (see Definition \ref{def:conti}) and in the case that $a(t,\cdot)$ Dini continuous uniformly in $t$ (see Definition \ref{def:conti}),  then we have the better continuity of $u(t,\cdot )$ (see \ref{thm:main2} and \ref{thm:main3} of Theorem \ref{thm:main}, respectively).
The parabolic equations of the non-divergence type with Dini continuous coefficients are studied in \cite{PrEi}.
Under a little stronger assumption on $a$ about the continuity, and the Dini continuity of $b(t,\cdot)$ and $c(t,\cdot)$ uniformly in $t$, the bounds of the derivatives of the fundamental solution have been obtained in Theorem 19 of \cite{PrEi}.
We remark that the assumptions in the present paper are weaker than that in Theorem 19 of \cite{PrEi}.
The Dini continuity is also appears in \cite{CaKe}.
The equation concerned in \cite{CaKe} is of the divergence type, however a similar result is obtained (see Corollary 1.2.22 in \cite{CaKe}).

In the main theorem (Theorem \ref{thm:main}), the constant which appears in the H\"older continuity is depending on the transition probability measure $p^X$ of $X$.
In the corollaries we consider sufficient conditions to remove the dependence of $p^X$ from the estimate.

In Corollary \ref{cor:main} we assume that the transition probability measure $p^X$ has the bounded density function (see the assumption of Corollary \ref{cor:main}).
In this case, the existence of the fundamental solution to (\ref{PDE}) is obtained, and estimates follow from Theorem \ref{thm:main}.
As the consequence, under the additional assumption we obtain the $(1-\varepsilon)$-H\"older continuity of the fundamental solution in the spatial component.
When $a(t,\cdot)$ is locally Dini continuous uniformly in $t$, we have the $x(-\log x)$-order continuity of the fundamental solution.
Moreover, when $a(t,\cdot)$ is Dini continuous uniformly in $t$, we have the Lipschitz continuity.

When the coefficients $a$ and $b$ do not depend on $t$, there exists the density function of $p^X$.
Moreover, there are known estimates about the integrability of the density function (see Chapter 9 in \cite{StVa}).
Applying these estimates to Theorem \ref{thm:main}, we obtain the $(1-\varepsilon)$-H\"older continuity of the solution in a spatial component in the case that $a(t,\cdot)$ is continuous uniformly in $t$,
we have the $x(-\log x)$-order continuity in the case that $a(t,\cdot)$ is locally Dini continuous uniformly in $t$,
and we have the Lipschitz continuity in the case that $a(t,\cdot)$ is Dini continuous uniformly in $t$.

In the proof of the theorem, we express $u(t,x)$ by the Markov process $X$ and use the coupling method introduced by \cite{LiRo}.
The coupling method enables us to dominate the oscillation of $u(t,\cdot)$ by the oscillation of $X$, and as the consequence we have  a estimate of the oscillation of $u(t,\cdot)$ without the H\"older continuity of the coefficients $a$, $b$ and $c$.
In the estimate, the expectation of the coupling time appears. According to the upper bound of the expectation, we have the level of the continuity of $u(t,\cdot)$.

The organization of the present paper is as follows.
In Section \ref{sec:main}, we prepare notations and state the main theorem.
We also state the corollaries and prove them by applying the main theorem.
In Section \ref{sec:representation}, we consider the case that $a$ and $b$ are smooth, prove the H\"older continuity and show the dependence of the constant in the estimate.
This section is the main part of the present paper.
We use the coupling method to estimate the oscillation of the solution from the oscillation of the associated stochastic processes.
In Section \ref{sec:general}, we consider the case that $a$ and $b$ are not smooth, and prove the main result by using the result in Section \ref{sec:representation}.
The proof is done only by approximating $a$ and $b$ by smooth functions.
When we take the limit, we apply Stroock and Varadhan's result.

Now we give some notations.
Let $s \wedge t := \min \{ s,t\}$ and $s \vee t := \max \{ s,t\}$ for $s,t \in {\mathbb R}$.
For $p\in [1,\infty]$ denote the H\"older conjugate of $p$ by $p^*$.
Random variables in the present paper are considered on a probability space $(\Omega ,{\mathscr F}, P)$, we denote the expectation of random variables by $E[\, \cdot \, ]$ and the expectation on the event $A\in {\mathscr F}$ (i.e. $\int _A\, \cdot\, dP$) by $E[\, \cdot \, ; A]$.

\section{Main result}\label{sec:main}

Consider the following parabolic partial differential equation:
\begin{equation}\label{PDEX}
\left\{ \begin{array}{rl}
\displaystyle \frac{\partial}{\partial t} u^X(t,x) & \displaystyle = \frac 12 \sum _{i,j=1}^d a_{ij}(t,x)\frac{\partial ^2}{\partial x_i \partial x_j} u^X(t,x) +  \sum _{i=1}^d b_i(t,x) \frac{\partial }{\partial x_i} u^X(t,x) \\[3mm]
\displaystyle u^X(0,x)& =f(x). 
\end{array}\right.
\end{equation}
This equation is obtained by letting $c=0$ in (\ref{PDE}).
Define $d\times d$-matrix-valued function $\sigma (t,x)$ by the square root of $a(t,x)$.
Then, $a(t,x)= \sigma (t,x) \sigma (t,x)^T$ and
\begin{equation}\label{eq:sigma}
\sup _{t\in [0,\infty )}\sup _{i,j} |\sigma _{ij}(t,x)-\sigma _{ij}(t,y)| \leq C\rho _R (|x-y|) ,\quad x,y\in B(0;R),
\end{equation}
where $C$ is a constant depending on $\Lambda$.
Note that (\ref{ass:a1}) implies
\begin{equation}\label{eq:sigma2}
\Lambda ^{-1/2} I \leq \sigma (t,x) \leq \Lambda ^{1/2} I.
\end{equation}
Let $T>0$.
Consider the stochastic differential equation:
\begin{equation}\label{SDEX}
\left\{ \begin{array}{rl}
\displaystyle dX_t^x & \displaystyle = \sigma (T-t,X_t^x)dB_t + b(T-t,X_t^x)dt, \quad t\in [0,T]\\
\displaystyle X_0^x& \displaystyle =x .
\end{array}\right.
\end{equation}
From (\ref{eq:sigma}), (\ref{eq:sigma2}) and the boundedness of $b$, we have the existence and the uniqueness of the solution $X^x$ to (\ref{SDEX}) (see \cite{StVa}).
Denote the transition probability measure of $X$ by $p^X(s,x;t,dy)$.
We remark that $X$ and $p^X$ are depending on $T$.
The parabolic partial differential equation (\ref{PDEX}) and the stochastic differential equation (\ref{SDEX}) are associated with each other, and it holds that
\[
u^X(T,x) = E[f(X_T^x)] = \int _{{\mathbb R}^d} f(y) p^X(0,x;T,dy), \quad f\in C_b({\mathbb R}^d)
\]
(see (3.1) below for the detail).
Hence, considering (\ref{PDEX}) is equivalent to considering (\ref{SDEX}).

Before stating the main theorem, we prepare the following.

\begin{defi}\label{def:conti}
Let $f$ be a function on ${\mathbb R}^d$.
\begin{enumerate}
\item If for $R>0$ there exists a continuous and nondecreasing function $\rho _R$ on $[0,\infty )$ such that $\rho _R (0) =0$, $\int _0^1 r^{-1}\rho _R(r) dr <\infty$ and
\[
\left| f(x) - f(y)\right| \leq \rho _R (|x-y|) ,\quad x,y\in B(0;R),
\]
then $f$ is called locally Dini continuous.

\item If there exists a continuous and nondecreasing function $\rho $ on $[0,\infty )$ such that $\rho (0) =0$, $\int _0^1 r^{-1}\rho (r) dr <\infty$ and
\[
\left| f(x) - f(y)\right| \leq \rho (|x-y|) ,\quad x,y\in {\mathbb R}^d,
\]
then $f$ is called Dini continuous.
\end{enumerate}
\end{defi}

We remark that a Dini continuous function is locally Dini continuous and uniformly continuous.
It is easy to see that a H\"older continuous function and a locally H\"older continuous function are Dini continuous and locally Dini continuous, respectively.
It is also easy to see that; for $\alpha \in (1,\infty)$ a function $f$ on ${\mathbb R}^d$ which satisfies
\[
\left| f(x) - f(y)\right| \leq C\min\{ 1, (-\log |x-y|)^\alpha\} ,\quad x,y\in {\mathbb R}^d
\]
with a positive constant $C$, is Dini continuous.
Hence, the class of the Dini continuous functions is larger than the class of the H\"older continuous functions.

Let $f$ be a function on $[0,\infty )\times {\mathbb R}^d$.
If for $R>0$ there exists a continuous and nondecreasing function $\rho _R$ on $[0,\infty )$ such that $\rho _R (0) =0$ and
\[
\left| f(t,x) - f(t,y)\right| \leq \rho _R (|x-y|) ,\quad x,y\in B(0;R),
\]
we call $f(t,\cdot )$ is continuous uniformly in $t$.
If a function $f$ on $[0,\infty )\times {\mathbb R}^d$ such that $f(t,\cdot)$ is locally Dini continuous and the function $\rho _R$ appeared in Definition \ref{def:conti} can be chosen independently of $t$, then we call $f(t,\cdot )$ is locally Dini continuous uniformly in $t$.
Similarly, we define a function Dini continuous uniformly in $t$.

For a matrix-valued function $f(t,x)=(f_{ij}(t,x))$ on $[0,\infty )\times {\mathbb R}^d$, we say that $f$ is continuous uniformly in $t$ (resp. locally Dini continuous uniformly in $t$ and Dini continuous function uniformly in $t$) if all components of $f$ are continuous uniformly in $t$ (resp. locally Dini continuous uniformly in $t$ and Dini continuous function uniformly in $t$).

In the main theorem, we will assume that the coefficient $a(t,x)$ of the second order derivative term is continuous in $x$ uniformly in $t$.
The assumption is equivalent to the existence of $\rho _R$ such that (\ref{ass:a2}) holds.
As a rule of the present paper, $\rho _R$ will be regarded as the functions satisfying (\ref{ass:a2}), once $a(t,\cdot )$ is assumed to be continuous uniformly in $t$.
When $a(t,\cdot )$ is assumed to be locally Dini continuous uniformly in $t$, $\rho _R$ will be regarded as the functions satisfying (\ref{ass:a2}) and $\int _0^1 r^{-1}\rho _R(r) dr <\infty$ for $R>0$.
Furthermore, when $a(t,\cdot)$ is assumed to be Dini continuous uniformly in $t$, $\rho$ will be regarded as the function satisfying (\ref{ass:a2}) with replacement $\rho _R$ by $\rho$ and $B(0;R)$ by ${\mathbb R}^d$, and satisfying $\int _0^1 r^{-1}\rho(r) dr <\infty$.

Now we state the main result of the present paper.

\begin{thm}\label{thm:main}
Assume (\ref{ass:a1}) and that $a(t,\cdot )$ is continuous uniformly in $t$, and let $u$ be the solution of (\ref{PDE}).

\begin{enumerate}
\item \label{thm:main1}
For any $p\in [1,\infty)$, $R>0$ and sufficiently small $\varepsilon >0$, there exists a constant $C$ depending on $d$, $\Lambda$, $\varepsilon$, $R$, $\rho _R$, $\| b\| _\infty$ and $\| c\| _\infty$ such that
\begin{align*}
|u(T,x)-u(T,z)| &\leq Cs^{-(1+1/p)}Te^{CT} |x-z|^{(1-\varepsilon )/p} \\
&\quad \times \max_{\eta = x,z} \left\| \int _{{\mathbb R}^d} |f(y)| p^{X}\left( s,\cdot ;T, dy\right) \right\| _{L^{p^*}({\mathbb R}^d, p^{X}\left( 0,\eta ;s, \cdot \right))}
\end{align*}
for $f\in C_b({\mathbb R}^d)$, $s\in (0,T)$, and $x,z \in B(0;R/2)$.

\item \label{thm:main2} Additionally assume that $a(t,\cdot )$ is locally Dini continuous uniformly in $t$.
Then, for any $p\in [1,\infty)$ and $R>0$, there exists a constant $C$ depending on $d$, $\Lambda$, $R$, $\rho _R$, $\| b\| _\infty$ and $\| c\| _\infty$ such that
\begin{align*}
|u(T,x)-u(T,z)| &\leq Cs^{-(1+1/p)}Te^{CT} (|x-z| \max\{ 1, -\log |x-z|\} )^{1/p}\\
&\quad \times \max_{\eta = x,z} \left\| \int _{{\mathbb R}^d} |f(y)| p^{X}\left( s,\cdot ;T, dy\right) \right\| _{L^{p^*}({\mathbb R}^d, p^{X}\left( 0,\eta ;s, \cdot \right))}
\end{align*}
for $f\in C_b({\mathbb R}^d)$, $s\in (0,T)$, and $x,z \in B(0;R/2)$.

\item \label{thm:main3}
Additionally assume that $a(t,\cdot )$ is Dini continuous uniformly in $t$.
Then, for any $p\in [1,\infty)$, there exists a constant $C$ depending on $d$, $\Lambda$, $\rho$, $\| b\| _\infty$ and $\| c\| _\infty$ such that
\begin{align*}
|u(T,x)-u(T,z)| &\leq Cs^{-(1+1/p)}Te^{CT} |x-z|^{1/p} \\
&\quad \times \max_{\eta = x,z} \left\| \int _{{\mathbb R}^d} |f(y)| p^{X}\left( s,\cdot ;T, dy\right) \right\| _{L^{p^*}({\mathbb R}^d, p^{X}\left( 0,\eta ;s, \cdot \right))}
\end{align*}
for $f\in C_b({\mathbb R}^d)$, $s\in (0,T)$, and $x,z \in {\mathbb R}^d$.
\end{enumerate}
\end{thm}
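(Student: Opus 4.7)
Following the roadmap indicated at the end of the introduction, the proof proceeds in two stages. First I would establish the estimates under the additional assumption that $a$ and $b$ are smooth (Section~\ref{sec:representation}), and then remove the smoothness by mollification together with Stroock--Varadhan stability (Section~\ref{sec:general}). Throughout, I fix $f\in C_b({\mathbb R}^d)$ and work with the diffusion $X^x$ of (\ref{SDEX}).

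In the smooth case, the starting point is the Feynman--Kac representation
\[
u(T,x) \;=\; E\!\left[f(X_T^x)\exp\!\left(\int_0^T c(T-r, X_r^x)\,dr\right)\right],
\]
combined with a Lindvall--Rogers coupling \cite{LiRo} of $(X^x, X^z)$: the two processes are driven by Brownian motions reflected across the hyperplane orthogonal to $X_t^x - X_t^z$ until the coupling time $\tau:=\inf\{t\ge 0: X_t^x = X_t^z\}$, after which they move together. Decomposing
\[
u(T,x)-u(T,z) \;=\; E\bigl[(f(X_T^x) - f(X_T^z))e_T^x\bigr] + E\bigl[f(X_T^z)(e_T^x - e_T^z)\bigr],
\]
with $e_T^\eta := \exp\bigl(\int_0^T c(T-r, X_r^\eta)\,dr\bigr)$, the first summand is supported on $\{\tau>s\}$ since on $\{\tau\le s\}$ the paths coincide for $t\ge\tau$, in particular at $t=T$; while the second is controlled by $|e_T^x-e_T^z|\le 2\|c\|_\infty e^{\|c\|_\infty T}(\tau\wedge T)$ because on $\{r>\tau\}$ the two integrands agree. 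The Markov property at time $s$ gives $E[|f(X_T^\eta)|\mid\mathcal F_s]=g_s(X_s^\eta)$ with $g_s(w):=\int|f(y)|\,p^X(s,w;T,dy)$, and H\"older's inequality with conjugate exponents $(p,p^*)$ yields
\[
E\bigl[|f(X_T^\eta)|\mathbf 1_{\{\tau>s\}}\bigr]\;\le\;P(\tau>s)^{1/p}\,\|g_s\|_{L^{p^*}(p^X(0,\eta;s,\cdot))},
\]
which is precisely the $L^{p^*}$-norm of the statement. Combining with $P(\tau>s)\le s^{-1}E[\tau\wedge T]$ contributes a factor $s^{-1/p}$; the remaining $s^{-1}$ and $Te^{CT}$ come from handling the Feynman--Kac weight term, where a gradient-type estimate valid in the smooth case is used to convert an $E[|f(X_T^z)|(\tau\wedge T)]$ bound back into the same norm, with the $T$ and $e^{CT}$ reflecting the $\|c\|_\infty$-dependence.

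The technical heart of the proof, and the main obstacle, is the estimate of $E[\tau\wedge T]$ in terms of $|x-z|$ and the modulus $\rho_R$. Writing the SDE for $Y_t:=X_t^x-X_t^z$ under the coupling, the quadratic variation of $|Y_t|$ is controlled by $|\sigma(T-t,X_t^x)-\sigma(T-t,X_t^z)|^2\le C\rho_R(|Y_t|)^2$ via (\ref{eq:sigma}), while the drift is bounded by $2\|b\|_\infty$. Applying It\^o's formula to a suitably chosen concave function of $|Y_t|$ and stopping at the first exit from $B(0;R)$ reduces matters to a one-dimensional comparison, governed by integrals of the form $\int_0^{|x-z|} r^{-1}\rho_R(r)\,dr$. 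This should yield estimates
\[
E[\tau\wedge T]\;\le\;CTe^{CT}\,\omega(|x-z|),
\]
with $\omega(r)\asymp r^{1-\varepsilon}$ under the mere continuity of $a$ (where $\varepsilon$ comes in via an interpolation because the integral $\int_0^1 r^{-1}\rho_R(r)\,dr$ may diverge), $\omega(r)\asymp r\max\{1,-\log r\}$ in the locally Dini case, and $\omega(r)\asymp r$ in the globally Dini case (where the stopping is unnecessary, giving an estimate valid for all $x,z\in{\mathbb R}^d$); raising to the power $1/p$ produces the three spatial moduli of Theorem~\ref{thm:main}. This is where the three cases genuinely diverge.

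To remove smoothness, I would mollify $a, b$ into smooth $a^{(n)}, b^{(n)}$ that preserve (\ref{ass:a1}), (\ref{ass:a2}) and the relevant Dini modulus with the same $\rho_R$ (or $\rho$), and let $u^{(n)}, X^{(n)}$ denote the associated solution and diffusion. The smooth-case estimate applies uniformly in $n$ because its constants depend only on $d,\Lambda,R,\rho_R,\|b\|_\infty,\|c\|_\infty$ (and $\varepsilon$ or $\rho$). The well-posedness of the martingale problem in \cite{StVa} gives $X^{(n)}\Rightarrow X$ in law, hence weak convergence of the transition kernels $p^{X^{(n)}}(0,\eta;s,\cdot)$ and $p^{X^{(n)}}(s,\cdot;T,\cdot)$; the Feynman--Kac representations of $u^{(n)}$ then converge to that of $u$, and both sides of the estimate pass to the limit, the right-hand side thanks to the uniform ellipticity $\Lambda^{-1}I\le a^{(n)}\le\Lambda I$ which supplies the uniform integrability needed to carry weak convergence through the $L^{p^*}$-norm.
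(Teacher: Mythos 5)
Your proposal follows the paper's two-stage strategy — Feynman--Kac representation plus the Lindvall--Rogers reflection coupling in the smooth case, then mollification of $\sigma,b$ together with Stroock--Varadhan stability of the martingale problem to pass to the general case — and it isolates the same three coupling-time estimates ($E[\tau\wedge T]\lesssim |x-z|^{1-\varepsilon}$, $\lesssim |x-z|(1\vee(-\log|x-z|))$, $\lesssim |x-z|$) and the same Markov-property/H\"older/Chebyshev mechanism for producing the $L^{p^*}(p^X(0,\eta;s,\cdot))$ norms, so this is essentially the paper's proof, with your direct split of $u(T,x)-u(T,z)$ being a somewhat cleaner variant of the paper's manipulation through $p^X(t_0,X_{t_0};T,y)$ and conditional expectations (Proposition 3.5). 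One caution: what you call ``a gradient-type estimate valid in the smooth case'' for converting $E[|f(X_T^z)|(\tau\wedge T)]$ into the $L^{p^*}$ norm is, in the paper's Lemma 3.4, not a gradient estimate at all but a second splitting over $\{\tau\le s\}$ and $\{\tau>s\}$ combined with H\"older ($E[(t\wedge\tau)^p\mathbf 1_{[0,s]}(\tau)]\le s^{p-1}E[s\wedge\tau]$) and Chebyshev — this distinction matters because a genuine gradient estimate would carry constants depending on the smoothness of $a^{(n)},b^{(n)}$ and would not be uniform in the mollification parameter, breaking the final limiting argument, whereas the paper's device depends only on $d,\Lambda,\rho_R,\|b\|_\infty,\|c\|_\infty$.
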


\begin{rem}
Letting $s= T/2$ and $p=1$ in Theorem \ref{thm:main} {\ref{thm:main3}}, we obtain
\[
|u(T,x)-u(T,z)| \leq CT^{-1} e^{CT} |x-z| \| f\| _\infty
\]
for $f\in C_b({\mathbb R}^d)$, $T\in (0,\infty )$ and $x,z \in {\mathbb R}^d$.
In the case that $a$, $b$ and $c$ do not depend on the time component, $a$ is Dini continuous and $c\leq 0$, Priola and Wang has obtained Lipschitz continuity of $u(t,\cdot )$ with Lipschitz constant $\frac{C}{\sqrt{t\wedge 1}}$ by a similar idea without the boundedness of the coefficients (see \cite{PrWa}).
In this paper we assume the boundedness of the all coefficients, because we starts with the assumptions in {\ref{thm:main1}} and obtain {\ref{thm:main3}} as a specific case.
In \cite{PoPr} the result in \cite{PrWa} is shown by a purely analytic way. Moreover, they concerns the case of unbounded continuous $f$ with a growth condition, and obtain the Lipschitz continuity of $u(t,\cdot )$.
We remark that in \cite{PoPr} the result is stated for viscosity solutions.
\end{rem}

Adding a little more assumption on $p^X$, we can remove $p^X$ from the upper bounds in the estimates in Theorem \ref{thm:main} and have the modulus of the continuity of the fundamental solution to $(\ref{PDE})$.
To state the definition of the fundamental solution, denote
\[
L_t f(x) := \frac 12 \sum _{i,j=1}^d a_{ij}(t,x)\frac{\partial ^2}{\partial x_i \partial x_j} f(x)+  \sum _{i=1}^d b_i(t,x) \frac{\partial }{\partial x_i}f(x) + c(t,x) f(x) ,\quad f\in C^2_b ({\mathbb R}^d).
\]
A measurable function $p(s,x;t,y)$ defined for $s,t\in [0,\infty)$ such that $s<t$, and $x,y \in {\mathbb R}^d$ is called a fundamental solution to (\ref{PDE}), if $p(s,x;t,y)$ satisfies
\begin{align*}
& \frac{\partial}{\partial t} \int _{{\mathbb R}^d} f(y)p(s,\cdot  ;t,y) dy = L_t \left( \int _{{\mathbb R}^d} f(y)p(s,\cdot ;t,y) dy \right) \\
& \lim _{r\downarrow s}  \int _{{\mathbb R}^d} f(y)p(s,\cdot  ;r,y) dy = f
\end{align*}
for $f\in C_b^2({\mathbb R}^d)$ with a compact support.
When $c=0$ and $p^X(s,x;T,dy)$ is absolutely continuous with respect to the Lebesgue measure, $p(s,x;T,y)$ is obtained by
\begin{equation}\label{eq:fund-trans}
p(s,x;T,y) = \frac{p^X(s,x;T,dy)}{dy}
\end{equation}
where the right-hand side is the Radon-Nikodym derivative of $p^X(s,x;T,dy)$ with respect to the Lebesgue measure.
The right-hand side of (\ref{eq:fund-trans}) is called the transition probability density function.
In view of this fact, we define the fundamental solution $p(s,x;t,y)$ as above.
However, the order of the parameters $s,x,t,y$ in the present paper is a little different from the standard.
For example, it is different from the notation in \cite{Fr}.

Now we state a corollary of Theorem \ref{thm:main}.
In the following corollary, we assume the existence of the density function of $p^X(T/2,x;T,\cdot)$ and the bounds of the density function.
In this case, we have the fundamental solution $p(T/2,x;T,y)$ to (\ref{PDE}), and moreover we obtain the modulus of the continuity of $p(s,\cdot ;t,y)$, as follows.

\begin{cor}\label{cor:main}
Assume (\ref{ass:a1}) and that $a(t,\cdot )$ is continuous uniformly in $t$.
Moreover, we assume that for each $(T,x)\in (0,\infty )\times {\mathbb R}^d$, $p^{X}\left( T/2,x;T, \cdot \right)$ is absolutely continuous with respect to the Lebesgue measure and there exists a continuous function $\nu (t)$ on $(0,\infty)$ such that
\[
\sup _{x\in {\mathbb R}^d} \frac{p^{X}\left( T/2,x;T, dy\right)}{dy} \leq \nu (T), \quad T\in (0,\infty).
\]
Then, there exists a measurable function $p(0,x;t,y)$ on $t\in (0,\infty)$ and $x,y\in {\mathbb R}^d$ which satisfies the definition of the fundamental solution to (\ref{PDE}) under the restriction that $s=0$, and the followings hold.

\begin{enumerate}
\item \label{cor:main1-1} For $R>0$ and sufficiently small $\varepsilon >0$, there exists a constant $C$ depending on $d$, $\Lambda$, $\varepsilon$, $R$, $\rho _R$, $\| b\| _\infty$ and $\| c\| _\infty$ such that
\begin{align*}
|p(0,x;t,y)-p(0,z;t,y)| &\leq C t^{-1} \nu (t) e^{Ct} |x-z|^{1-\varepsilon }
\end{align*}
for $t\in (0,\infty )$, almost every $y\in {\mathbb R}^d$ with respect to the Lebesgue measure, and $x,z \in B(0;R/2)$.

\item \label{cor:main1-2}
Additionally assume that $a(t,\cdot )$ is locally Dini continuous uniformly in $t$.
Then, for $R>0$, there exists a constant $C$ depending on $d$, $\Lambda$, $R$, $\rho _R$, $\| b\| _\infty$ and $\| c\| _\infty$ such that
\begin{align*}
|p(0,x;t,y)-p(0,z;t,y)| &\leq C t^{-1} \nu (t) e^{Ct} |x-z| \max\{ 1, -\log |x-z|\} 
\end{align*}
for $t\in (0,\infty )$, almost every $y\in {\mathbb R}^d$ with respect to the Lebesgue measure,  and $x,z \in B(0;R/2)$.

\item \label{cor:main1-3}
Additionally assume that $a(t,\cdot )$ is Dini continuous uniformly in $t$.
Then, there exists a constant $C$ depending on $d$, $\Lambda$, $\rho$, $\| b\| _\infty$ and $\| c\| _\infty$ such that
\begin{align*}
|p(0,x;t,y)-p(0,z;t,y)| &\leq C t^{-1} \nu (t) e^{Ct} |x-z|
\end{align*}
for $t\in (0,\infty )$, almost every $y\in {\mathbb R}^d$ with respect to the Lebesgue measure,  and $x,z \in {\mathbb R}^d$.
\end{enumerate}
\end{cor}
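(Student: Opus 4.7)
The strategy is to apply Theorem \ref{thm:main} with the specialized choice $s = T/2$ and $p = 1$, so that $p^* = \infty$ and the essential supremum appearing on the right-hand side is immediately bounded by the hypothesized density bound $\nu(T)$. The resulting oscillation estimate on $u(T, \cdot)$ is then transferred to the fundamental solution by $L^1$--$L^\infty$ duality.

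First I would note that, by Chapman--Kolmogorov,
\[
p^X(0, x; t, A) = \int _{{\mathbb R}^d} p^X(t/2, w; t, A)\, p^X(0, x; t/2, dw) \leq \nu(t) |A|,
\]
so $p^X(0, x; t, \cdot)$ itself has a Lebesgue density bounded by $\nu(t)$. Combined with the Feynman--Kac representation
\[
u(t, x) = E\!\left[ f(X_t^x) \exp\!\left( \int_0^t c(t-r, X_r^x)\, dr \right) \right],
\]
this gives $|u(t, x)| \leq e^{t \|c\|_\infty} \nu(t) \|f\|_{L^1}$ for every $f \in C_b({\mathbb R}^d)$ of compact support. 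Since $(L^1)^* = L^\infty$, for each fixed $(t, x)$ there exists a unique element $p(0, x; t, \cdot) \in L^\infty$ of norm at most $e^{t \|c\|_\infty} \nu(t)$ such that $u(t, x) = \int f(y)\, p(0, x; t, y)\, dy$ for all such $f$. Joint measurability in $(x, y)$ is then obtained by mollifying in $y$ and invoking the continuity of $x \mapsto u(t, x)$, and the two defining relations of a fundamental solution in the sense given just before the corollary are inherited from the PDE and the initial condition satisfied by $u$.

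With the fundamental solution in hand, I apply Theorem \ref{thm:main} with $s = T/2$ and $p = 1$. The $L^{p^*}$-norm on the right-hand side collapses to
\[
\operatorname*{ess\,sup}_{w} \int _{{\mathbb R}^d} |f(y)|\, p^X(T/2, w; T, dy) \leq \nu(T) \|f\|_{L^1},
\]
while the prefactor satisfies $s^{-(1+1/p)} T = 4 T^{-1}$. Part \ref{thm:main1} of the theorem then yields
\[
|u(T, x) - u(T, z)| \leq C' T^{-1} \nu(T) e^{CT} |x - z|^{1-\varepsilon} \|f\|_{L^1}.
\]
Substituting the representation $u(T, x) - u(T, z) = \int f(y)[p(0, x; T, y) - p(0, z; T, y)]\, dy$ and taking the supremum over $f \in C_b({\mathbb R}^d)$ of compact support with unit $L^1$-norm identifies the left side with $\operatorname*{ess\,sup}_y |p(0, x; T, y) - p(0, z; T, y)|$, which gives \ref{cor:main1-1}. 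Parts \ref{cor:main1-2} and \ref{cor:main1-3} follow by exactly the same argument from \ref{thm:main2} and \ref{thm:main3}, the only change being the modulus of continuity in $|x-z|$.

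The main point of care, rather than a deep obstacle, is the verification that the function $p(0, x; t, y)$ produced by duality actually satisfies the differential relations defining a fundamental solution, and the choice of a jointly measurable representative. Both are routine given that $u$ is the mild solution of (\ref{PDE}) in the sense of Stroock--Varadhan, since the PDE and initial condition transfer from $u$ to $p$ after testing against arbitrary compactly supported $f \in C_b^2({\mathbb R}^d)$.
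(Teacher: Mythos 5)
Your proof is correct and follows essentially the same route as the paper's: apply Theorem \ref{thm:main} with $s=T/2$ and $p=1$ (so $p^*=\infty$), use the hypothesized density bound to collapse the $L^\infty$ factor to $\nu(T)\|f\|_{L^1}$, and pass to the fundamental solution by $L^1$--$L^\infty$ duality. You spell out the duality step and the Chapman--Kolmogorov propagation of the density bound from $p^X(T/2,x;T,\cdot)$ to $p^X(0,x;T,\cdot)$, which the paper leaves implicit in the phrase ``this inequality implies the existence of the fundamental solution.''
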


\begin{proof}
Applying \ref{thm:main1} of Theorem \ref{thm:main} with $p=1$ and $s=T/2$, we have that; for $R>0$ and sufficiently small $\varepsilon >0$, there exists a constant $C$ depending on $d$, $\Lambda$, $\varepsilon$, $R$, $\rho _R$, $\| b\| _\infty$ and $\| c\| _\infty$ such that
\[
|u(T,x)-u(T,z)| \leq C T^{-1} \nu (T) e^{CT} |x-z|^{1-\varepsilon } \| f\| _{L^1({\mathbb R}^d)}
\]
for $f\in C_b({\mathbb R}^d)$ such that ${\rm supp} f \subset B(0;R)$, $T\in (0,\infty )$ and $x,z \in B(0;R/2)$.
This inequality implies the existence of the fundamental solution $p(0,x;t,y)$ and \ref{cor:main1-1}.

The proofs of \ref{cor:main1-2} and \ref{cor:main1-3} are obtained by applying \ref{thm:main2} and \ref{thm:main3} of Theorem \ref{thm:main} instead of \ref{thm:main1} of Theorem \ref{thm:main}, respectively.
\end{proof}

\begin{rem}\label{rem:counterexample}\rm
\begin{enumerate}
\item When the upper Gaussian estimate holds for $p^X$, then $\nu (T)$ can be chosen by $CT^{-d/2}$ with a constant $C$.

\item The order of $|x-z|$ and the order of $t$ for small $t$ in \ref{cor:main1-3} of Corollary \ref{cor:main} is optimal under the condition.
Consider the one-dimensional parabolic equation:
\[
\frac{\partial}{\partial t} u(t,x) = \frac 12 \frac{\partial ^2}{\partial x^2} u(t,x) - \theta {\rm sgn}(x) \frac{\partial }{\partial x} u(t,x)
\]
where $\theta >0$ and the function ${\rm sgn}$ is defined by ${\rm sgn} x := x/|x|$ for $x \neq 0$ and ${\rm sgn}0 :=0$.
The fundamental solution $p(0,x;t,y)$ of this equation is obtained explicitly as
\begin{align*}
&p(0,x;t,y)\\
& = \left\{ \begin{array}{l}
\displaystyle \frac{1}{\sqrt{2\pi t}}\left[ \exp\left\{ -\frac{(x-y-\theta t)^2}{2t}\right\} + \theta e^{-2\theta y} \int _{x+y}^\infty \exp\left\{ -\frac{(\xi -\theta t)^2}{2t}\right\} d\xi \right] \\[5mm]
\displaystyle \hspace{10cm} x\geq 0,\ y\geq 0,\\[5mm]
\displaystyle \frac{1}{\sqrt{2\pi t}}\left[ \exp\left\{ 2\theta x -\frac{(x-y+\theta t)^2}{2t}\right\} + \theta e^{2\theta y} \int _{x-y}^\infty \exp\left\{ -\frac{(\xi -\theta t)^2}{2t}\right\} d\xi \right] \\[5mm]
\displaystyle \hspace{10cm} x\geq 0,\ y< 0,\\[5mm]
\displaystyle \frac{1}{\sqrt{2\pi t}}\left[ \exp\left\{ -2\theta x -\frac{(x-y-\theta t)^2}{2t}\right\} + \theta e^{-2\theta y} \int _{-x+y}^\infty \exp\left\{ -\frac{(\xi -\theta t)^2}{2t}\right\} d\xi \right] \\[5mm]
\displaystyle \hspace{10cm} x< 0,\ y\geq 0,\\[5mm]
\displaystyle \frac{1}{\sqrt{2\pi t}}\left[ \exp\left\{ -\frac{(x-y+\theta t)^2}{2t}\right\} + \theta e^{2\theta y} \int _{-x-y}^\infty \exp\left\{ -\frac{(\xi -\theta t)^2}{2t}\right\} d\xi \right] \\[5mm]
\displaystyle \hspace{10cm} x< 0,\ y< 0.
\end{array}\right.
\end{align*}
 (see Remark 5.2 of Chapter 6 in \cite{KaSh}).
In this case, $p(0,x;t,y)$ is Lipschitz continuous. However it does not belong to $C^1({\mathbb R})$.
This implies that the obtained order of $|x-z|$ in \ref{cor:main1-3} of Corollary \ref{cor:main} is optimal.
As in the previous remark, $\nu (T)$ can be chosen by $CT^{-1/2}$ with a constant $C$.
Hence, the obtained order of $t$ in \ref{cor:main1-3} of Corollary \ref{cor:main} is $t^{-3/2}$ for small $t$.
This order coincides with the order obtained by the explicit calculation of the example.
\end{enumerate}
\end{rem}

Next, we consider another case that $p^X$ is to be removed from the upper bounds in the estimates in Theorem \ref{thm:main}.
Stroock and Varadhan deeply studied the properties of the transition density functions of the solutions to stochastic differential equations with low regular coefficients in \cite{StVa}.
As one of their results, it is known that when the coefficients do not depend on time, then the transition probability density function exists for all time and the transition probability density function is in $L^p$ for all $p \in [1,\infty)$ (see Corollary 9.2.7 in \cite{StVa}).
Applying this result to Theorem \ref{thm:main}, we can remove $p^X$ from the upper estimates and obtain the clearer modulus of continuity of the solutions to (\ref{PDE}), as follows.

\begin{cor}\label{cor:main2}
Assume that $a$ and $b$ do not depend on $t$.
Assume (\ref{ass:a1}) and that $a$ is continuous, and let $u$ be the solution of (\ref{PDE}).

\begin{enumerate}
\item \label{cor:main2-1}
For any $p\in (1,\infty]$, $R>0$, $\tilde T>0$, and sufficiently small $\varepsilon >0$, there exist constants $\alpha$ depending on $d$ and $p$, and $C$ depending on $\tilde T$, $d$, $p$, $\Lambda$, $\varepsilon$, $R$, $\rho _R$, $\| b\| _\infty$ and $\| c\| _\infty$ such that
\begin{align*}
|u(t,x)-u(t,z)| &\leq Ct^{-\alpha} |x-z|^{1-\varepsilon} \|f \| _{L^{p}({\mathbb R}^d)}
\end{align*}
for $f\in L^p({\mathbb R}^d)$ such that ${\rm supp} f \subset B(0;R/2)$, $t\in (0,\tilde T]$ and $x,z \in B(0;R/2)$.

\item \label{cor:main2-2}
Additionally assume that $a$ is locally Dini continuous.
Then, for any $p\in (1,\infty]$, $R>0$ and $\tilde T>0$, there exist constants $\alpha$ depending on $d$ and $p$, and $C$ depending on $\tilde T$, $d$, $p$, $\Lambda$, $R$, $\rho _R$, $\| b\| _\infty$ and $\| c\| _\infty$ such that
\begin{align*}
|u(t,x)-u(t,z)| &\leq Ct^{-\alpha} |x-z| \max\{ 1, -\log |x-z|\} \|f \| _{L^{p}({\mathbb R}^d)}
\end{align*}
for $f\in L^p({\mathbb R}^d)$ such that ${\rm supp} f \subset B(0;R/2)$, $t\in (0,\tilde T]$ and $x,z \in B(0;R/2)$.

\item \label{cor:main2-3}
Additionally assume that $a$ is Dini continuous.
Then, for any $p\in (1,\infty]$ and $\tilde T>0$, there exist constants $\alpha$ depending on $d$ and $p$, and $C$ depending on $\tilde T$, $d$, $p$, $\Lambda$, $\rho$, $\| b\| _\infty$ and $\| c\| _\infty$ such that
\begin{align*}
|u(t,x)-u(t,z)| &\leq Ct^{-\alpha} |x-z| \|f \| _{L^{p}({\mathbb R}^d)}
\end{align*}
for $f\in L^p({\mathbb R}^d)$, $t\in (0,\tilde T]$ and $x,z \in {\mathbb R}^d$.
\end{enumerate}
\end{cor}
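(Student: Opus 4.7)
The plan is to apply Theorem \ref{thm:main} with the parameter $p$ of the theorem set to $1$ and $s=t/2$, and then to remove the factor involving $p^X$ on the right-hand side by combining H\"older's inequality with the $L^q$-integrability of the transition density in the time-homogeneous case, due to Stroock and Varadhan (Corollary 9.2.7 of \cite{StVa}). Throughout, $p$ denotes the exponent appearing in the corollary statement and $p^*=p/(p-1)$ its H\"older conjugate.

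Applying \ref{thm:main1} of Theorem \ref{thm:main} with its parameter $p$ equal to $1$ and with $s=t/2$, the right-hand side takes the form
\[
C\,(t/2)^{-2}\, t\, e^{Ct}\, |x-z|^{1-\varepsilon} \cdot \max_{\eta=x,z} \Bigl\| \int |f(y)|\, p^X(t/2,\cdot;t,dy) \Bigr\|_{L^\infty(p^X(0,\eta;t/2,\cdot))}.
\]
The $L^\infty$-norm is dominated by $\sup_{\xi\in\mathbb{R}^d} \int |f(y)|\,p^X(t/2,\xi;t,dy)$. Since $a$ and $b$ do not depend on $t$, Corollary 9.2.7 of \cite{StVa} yields that $p^X(t/2,\xi;t,dy)$ possesses a density on $\mathbb{R}^d$ that lies in $L^{p^*}$ (globally in the setting of \ref{cor:main2-3}; locally, which suffices in \ref{cor:main2-1} and \ref{cor:main2-2} because $f$ is compactly supported). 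Tracking the dependence on the length of the time interval by a parabolic rescaling argument, one obtains
\[
\sup_{\xi} \bigl\| p^X(t/2,\xi;t,\cdot) \bigr\|_{L^{p^*}} \le C\, t^{-\gamma}, \qquad t\in(0,\tilde T],
\]
for some $\gamma=\gamma(d,p)$ and a constant $C$ of the type permitted by the corollary. H\"older's inequality then gives $\int |f(y)|\,p^X(t/2,\xi;t,y)\,dy \le C\, t^{-\gamma}\|f\|_{L^p}$. Substituting into the first display and absorbing $e^{Ct}\le e^{C\tilde T}$ into the constant yields \ref{cor:main2-1} with $\alpha=1+\gamma$, while \ref{cor:main2-2} and \ref{cor:main2-3} follow identically, invoking \ref{thm:main2} and \ref{thm:main3} of Theorem \ref{thm:main} respectively. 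Because the right-hand side depends on $f$ only through $\|f\|_{L^p}$, the assumption $f\in C_b(\mathbb{R}^d)$ in Theorem \ref{thm:main} is relaxed to $f\in L^p$ with the stated support condition by the standard density argument of approximating $f$ in $L^p$ by continuous bounded functions and passing to the limit.

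The principal obstacle is extracting from Stroock and Varadhan's result the explicit polynomial rate $t^{-\gamma}$ with $\gamma=\gamma(d,p)$. Their Corollary 9.2.7 asserts only that the transition density lies in $L^q$ for any $q\in[1,\infty)$, without displaying the dependence on the length of the time interval. The rate can be recovered either by combining their local $L^q$-bound with the Chapman--Kolmogorov identity, or more directly by a rescaling of the SDE (\ref{SDEX}) which converts a short time interval into a unit time interval with modified but still uniformly elliptic and bounded coefficients, so that the $L^{p^*}$-norm of the density rescales by the predicted power of $t$. Once this $t$-dependence is available, the corollary is a straightforward consequence of Theorem \ref{thm:main} and H\"older's inequality.
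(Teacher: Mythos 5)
Your proposal follows essentially the same route as the paper: apply Theorem~\ref{thm:main} with its parameter equal to $1$ and $s=t/2$, bound the resulting $L^\infty$-norm term by $\sup_\xi \int |f(y)|\,p^X(t/2,\xi;t,dy)$, invoke the $L^{p^*}$ integrability of the transition density in the time-homogeneous case from Stroock--Varadhan, and finish with H\"older's inequality. One small discrepancy: for part \ref{cor:main2-3}, where $f$ need not be compactly supported and the $L^{p^*}$ bound on the density must hold globally over ${\mathbb R}^d$, the paper switches from Corollary~9.2.7 to Theorem~9.2.6 of \cite{StVa}; you attribute both the local and global estimates to Corollary~9.2.7, which as stated in \cite{StVa} is a local result on balls. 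Your instinct that some extra work may be needed to extract the explicit $t^{-\gamma}$ rate from Corollary~9.2.7 is reasonable, though the paper simply reads the bound $C(T\wedge 1)^{-\beta}$ directly off the cited corollary without further scaling; either way this is a matter of how precisely one reads the reference rather than a structural difference in the argument.
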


\begin{proof}
Note that $p^X(s,x;T, \cdot)$ is absolutely continuous with respect to the Lebesgue measure in the case that $a,b$ do not depend on $t$ (see Lemma 9.2.2 in \cite{StVa}).
We denote $\frac{p^X(s,x;T,dy)}{dy}$ by $p^X(s,x;T,y)$.
Applying \ref{thm:main1} of Theorem \ref{thm:main} with $s=T/2$ and $p=1$, we have
\begin{equation}\label{eq:cormain2-2-0}
|u(T,x)-u(T,z)| \leq CT^{-1}e^{CT} |x-z|^{1-\varepsilon} \left\| \int _{{\mathbb R}^d} |f(y)| p^{X}\left( T/2,\cdot ;T, y\right) dy\right\| _{L^{\infty}({\mathbb R}^d)}
\end{equation}
for $f\in C_b({\mathbb R}^d)$, $T\in (0,\infty )$ and $x,z \in B(0;R/2)$, where $C$ is a constant depending on $d$, $\Lambda$, $\varepsilon$, $R$, $\rho _R$, $\| b\| _\infty$ and $\| c\| _\infty$.
On the other hand, Corollary 9.2.7 in \cite{StVa} implies that there exist constants $\beta$ depending on $d$ and $p$, and $C$ depending on $\tilde T$, $d$, $p$, $\Lambda$, $R$, $\rho _R$, $\| b\| _\infty$ and $\| c\| _\infty$ such that
\begin{equation}\label{eq:cormain2-2-1}
\sup _{\xi \in {\mathbb R}^d} \left( \int _{B(0;R/2)} |p^X(T/2,\xi ;T,y)|^{p^*} dy \right) ^{1/p^*} \leq C(T\wedge 1)^{-\beta}, \quad T\in (0,\tilde T] .
\end{equation}
Hence, by the H\"older's inequality
\begin{align*}
\int _{B(0;R)} |f(y)| p^{X}\left( T/2,\xi ;T, y\right) dy
&\leq \| f\| _{L^p({\mathbb R}^d)} \left( \int _{B(0;R/2)} |p^X(T/2,x;T,y)|^{p^*} dy \right) ^{1/p^*} \\
&\leq C(T\wedge 1)^{-\beta} \| f\| _{L^p({\mathbb R}^d)}
\end{align*}
for $f\in C_b({\mathbb R}^d)$ such that ${\rm supp} f \subset B(0;R/2)$ and $T\in (0, \tilde T]$.
Therefore, by (\ref{eq:cormain2-2-0}) we obtain the assertion for $f\in C_b({\mathbb R}^d)$ such that ${\rm supp} f \subset B(0;R/2)$.

Applying \ref{thm:main2} of Theorem \ref{thm:main} with $s=T/2$, we obtain \ref{cor:main2-2}.
The proof of \ref{cor:main2-3} is obtained by Theorem 9.2.6 in \cite{StVa} instead of Corollary 9.2.7 in \cite{StVa}.
\end{proof}

\begin{rem}
In Corollary \ref{cor:main2} $c$ can depend on the time component.
If $c$ is also independent of the time component, then the theory of analytic semigroup is applicable and a better result is obtained (see Section 3.1.1 in \cite{Lu}).
\end{rem}

\section{Probabilistic representation of the fundamental solution}\label{sec:representation}

In this section, we assume that $a_{ij}(t,x) \in C_b^\infty ([0,\infty)\times {\mathbb R}^d)$ and $b_i(t,x) \in C_b^\infty([0,\infty)\times {\mathbb R}^d)$, and will obtain an a priori estimate.
Let $T>0$.
Define $d\times d$-matrix-valued function $\sigma (t,x)$ by the square root of $a(t,x)$ and consider the stochastic differential equation (\ref{SDEX}).
Since $a_{ij}(t,x) \in C_b^\infty ([0,\infty)\times {\mathbb R}^d)$, (\ref{ass:a1}) implies that $\sigma_{ij}(t,x) \in C_b^\infty ([0,\infty)\times {\mathbb R}^d)$.
Note that $a(t,x)= \sigma (t,x) \sigma (t,x)^T$, (\ref{eq:sigma}) and (\ref{eq:sigma2}) hold.
Lipschitz continuity of $\sigma$ and $b$ implies that the existence of the solution and the pathwise uniqueness hold for (\ref{SDEX}).
Let $({\mathscr F}_t)$ be the $\sigma$-field generated by $(B_s; s\in [0,t])$.
Then, the pathwise uniqueness implies that the solution $X^x_t$ is ${\mathscr F}_t$-measurable for $t\in [0,\infty)$.
All stopping times which appear in this paper are regarded with respect to $({\mathscr F}_t)$.
We remark that the generator $(L_t^X)$ of $(X_t^x)$ is given by
\[
L_t^X = \frac 12 \sum _{i,j=1}^d a_{ij}(t,x)\frac{\partial ^2}{\partial x_i \partial x_j} + \sum _{i=1}^d b_i(t,x)\frac{\partial}{\partial x_i}.
\]
The smoothness of $\sigma$ and $b$ implies the existence of the fundamental solution $p^X(s,x;t,y)$ associated with $L_t^X$ and the smoothness on $(0,\infty ) \times {\mathbb R}^d \times (0,\infty ) \times {\mathbb R}^d$ (see e.g  \cite{KuSt} for probabilistic approach or \cite{LM} for analytic approach).

Consider the following backward parabolic equation on $[0,T]\times {\mathbb R}^d$
\begin{equation}\label{PDEv}
\left\{ \begin{array}{rl}
\displaystyle -\frac{\partial}{\partial t} v(t,x) \!\!\!\! & \displaystyle = \frac 12 \sum _{i,j=1}^d a_{ij}(T-t,x)\frac{\partial ^2}{\partial x_i \partial x_j} v(t,x) +  \sum _{i=1}^d b_i(T-t,x) \frac{\partial }{\partial x_i} v(t,x) \\
\!\!\!\! &\displaystyle \hspace{7.5cm} + c(T-t,x) v(t,x) \\
\displaystyle v(T,x) \!\!\!\!& =f(x).\displaystyle 
\end{array}\right.
\end{equation}
Then, it is easy to see the one-to-one correspondence between the solution $u$ to (\ref{PDE}) and the solution $v$ to (\ref{PDEv}) by $u(t,x) = v(T-t,x)$ for $(t,x) \in [0,T] \times {\mathbb R}^d$.
By the Feynman-Kac formula (see e.g. Proposition 3.10 of Chapter VIII in \cite{ReYo}), we have the following representation of $u(t,x)$ by $(X_t^x)$.
\begin{equation}\label{eq:uX}
u(T,x)= v(0,x) =E\left[ f(X_T^x)\exp \left( \int _0^T c(T-s,X_s^x)ds\right) \right] .
\end{equation}
Hence, to see the regularity of $u(T,\cdot )$, it is sufficient to see the regularity of the function $x \mapsto E\left[ f(X_T^x)\exp \left( \int _0^T c(T-s,X_s^x)ds\right) \right]$.
By the way, by the definition of the fundamental solution and (\ref{eq:uX}), we obtain the probabilistic representation of the fundamental solution:
\begin{equation}\label{eq:fundamental}
p(0,x;T,y) = p^X(0,x;T,y) E^{X^x_T =y}\left[ \exp \left( \int _0^T c(T-s,X_s^x)ds\right) \right]
\end{equation}
where $P^{X_T^x = y}$ is the conditional probability measure of $P$ on $X_T^x = y$ and $E^{X^x_T=y}[\, \cdot \, ]$ is the expectation with respect to $P^{X_T^x = y}$.
For the calculation of the conditional probabilities we will use the following equality (see (2.10) in \cite{Sei010-1}).
\begin{equation}\label{eq:conditional}
P^{X_t^x = y} \left( A \right) = \frac{1}{p^X(0,x;t,y)} \int _{{\mathbb R}^d} p^X(s,\xi ;t,y) P\left( A\cap\{ X_s^x \in d\xi \} \right)
\end{equation}
for $s,t\in (0,\infty )$ such that $s<t$, $A\in {\mathscr F}_s$ and $x,y \in {\mathbb R}^d$.

Now, to have estimates of the oscillation of $u(T,\cdot )$, we apply the coupling method introduced by \cite{LiRo}.
According to $(X^x, B)$ defined by (\ref{SDEX}), we consider the stochastic process $(Z_t^z)$ defined by
\begin{equation}\label{SDEz}
\left\{ \begin{array}{rl}
\displaystyle Z_t^z \!\!\!\! & \displaystyle = z + \int _0 ^{t\wedge \tau} \sigma (T-s,Z_s^z)d\tilde B_s + \int _{t\wedge \tau} ^t \sigma (T-s,Z_s^z)dB_s + \int _0^t b(T-s,Z^z_s) ds,\\
\!\!\!\! & \hspace{11cm} t\in [0,T],\\[3mm]
\displaystyle \tilde B_t \!\!\!\! & \displaystyle = \int _0^{t\wedge \tau} \!\!\! \left( I -\frac{2(\sigma (T-s,Z_s^z)^{-1} (X_s^x-Z_s^z))\otimes (\sigma (T-s,Z_s^z)^{-1} (X_s^x-Z_s^z))}{|\sigma (T-s,Z_s^z)^{-1} (X_s^x-Z_s^z)|^2}\right) dB_s
\end{array}\right.
\end{equation}
where $\tau$ is the stopping time defined by $\tau := \inf \{ t\geq 0; X_t^x=Z_t^z\} \wedge T$.
By the Lipschitz continuity of $\sigma$ and $b$, $(Z^z(t), \tilde B(t); t\in [0,\tau ))$ are determined almost surely and uniquely.
Let
\[
H_t :=I -\frac{2\left[ \sigma (T-t,Z_t^z)^{-1} (X_t^x-Z_t^z)\right] \otimes \left[ \sigma (T-t,Z_t^z)^{-1} (X_t^x-Z_t^z)\right] }{|\sigma (T-t,Z_t^z)^{-1} (X_t^x-Z_t^z)|^2} .
\]
Then, $H_t$ is an orthogonal matrix for all $t\in [0,\tau )$, and hence $\tilde B_t$ is a $d$-dimensional Brownian motion for $t\in [0,\tau )$.
Hence, $(Z^z(t), \tilde B(t); t\in [0,\tau ))$ are extended to $(Z^z(t), \tilde B(t); t\in [0,\tau ])$ almost surely and uniquely.
By the Lipschitz continuity of $\sigma$ and $b$ again, (\ref{SDEz}) is solved almost surely and uniquely for $t\in [\tau , T]$.
Thus, we obtain $(Z^z(t); t\in [0,T])$ almost surely and uniquely.
From this fact we have that $Z^z_t$ is ${\mathscr F}_t$-measurable for $t \in [0,T]$.
Besides, the argument above implies that; if $x=z$, $X^x$ and $Z^z$ has the same law.
Moreover, $X^x_t = Z^z_t$ for $t\in [\tau ,T]$ almost surely.

\begin{lem}\label{lem:estprob1}
For $R>0$ and sufficiently small $\varepsilon >0$, there exists a positive constants $C$ depending on $d$, $\Lambda$, $\varepsilon$, $R$, $\rho _R$ and $\| b\| _\infty$ such that
\begin{equation}\label{eq:estprob1-0}
E[t\wedge \tau] \leq C(1+t^2)|x-z|^{1-\varepsilon}
\end{equation}
for $t\in [0,T]$ and $x,z \in B(0;R/2)$.
\end{lem}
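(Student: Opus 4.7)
The plan is to work with the difference process $Y_t := X_t^x - Z_t^z$ and reduce the expected coupling time to a one-dimensional analysis of $R_t := |Y_t|$. On $[0,\tau)$, the construction (\ref{SDEz}) together with $\tilde B_t = \int_0^t H_s\,dB_s$ gives
\[
dY_t = M_t\,dB_t + \Delta b_t\,dt,\quad M_t := \sigma(T-t,X_t^x)-\sigma(T-t,Z_t^z)H_t,\ \Delta b_t := b(T-t,X_t^x)-b(T-t,Z_t^z).
\]
I decompose $M_t = M_t^{(1)}+M_t^{(2)}$ with $M_t^{(1)} := \sigma(T-t,X_t^x)-\sigma(T-t,Z_t^z)$ (a defect controlled by $\rho_R(R_t)$ via (\ref{eq:sigma})) and $M_t^{(2)} := 2\sigma(T-t,Z_t^z)\hat e_t\hat e_t^{T}$ with $\hat e_t$ as in (\ref{SDEz}). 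This isolates the mirror-coupling mechanism from the perturbation caused by the non-Lipschitz $\sigma$.

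The central computation is Itô's formula applied to $R_t^{2\alpha}$ with $\alpha := (1-\varepsilon)/2 \in (0,1/2)$. Using that $\sigma$ is the principal (symmetric) square root of $a$, one checks $\|M_t^{(2)}\|_{HS}^2 = 4g_t$ and $Y_t^{T}M_t^{(2)}M_t^{(2)T}Y_t = 4g_tR_t^2$ where $g_t := R_t^2/|\sigma(T-t,Z_t^z)^{-1}Y_t|^2 \in [\Lambda^{-1},\Lambda]$ by (\ref{ass:a1}). The mirror-part contributions combine to the principal drift of $R_t^{2\alpha}$,
\[
4\alpha(2\alpha-1)\,g_t R_t^{2\alpha-2} \;=\; -2\varepsilon(1-\varepsilon)\,g_t R_t^{2\alpha-2},
\]
strictly negative because $\alpha<1/2$. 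The remaining drift, from $M_t^{(1)}$ and $\Delta b_t$, is bounded by $CR_t^{2\alpha-2}\rho_R(R_t) + C\|b\|_\infty R_t^{2\alpha-1}$. Choosing a radius $r_0 > 0$ depending on $\varepsilon,\Lambda,\|b\|_\infty,\rho_R$ small enough that each error term is at most a quarter of the principal term whenever $R_t \leq r_0$, the total drift of $R_t^{2\alpha}$ is bounded above by $-c_0 R_t^{2\alpha-2} \leq -c_1 < 0$ on $\{R_t \leq r_0\}$.

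From this localised drift estimate I would construct a nonnegative Lyapunov function $V$ on $[0,r_0]$ with $V(0)=0$, $V(r)\leq Cr^{1-\varepsilon}$, and such that the generator $\mathcal A$ of $R_t$ (extracted from the Itô calculation above) satisfies $\mathcal AV(r) \leq -1$ for $r\in [0,r_0]$; near $0$, $V$ is a rescaling of $r^{1-\varepsilon}$. Dynkin's formula applied to $V(R_t)$ up to the stopping time $t\wedge\tau\wedge\tau_{r_0}$, with $\tau_{r_0} := \inf\{s : R_s\geq r_0\}$, together with $V\geq 0$ and the absorption $R_\tau = 0$, yields
\[
E[t\wedge\tau\wedge\tau_{r_0}] \;\leq\; V(|x-z|) \;\leq\; C|x-z|^{1-\varepsilon}.
\]
To pass to $E[t\wedge\tau]$, I use $E[t\wedge\tau] \leq E[t\wedge\tau\wedge\tau_{r_0}] + tP(\tau_{r_0}<t)$, bounding the excursion probability via Markov's inequality together with the Gronwall moment estimate $E[\sup_{s\leq t}|Y_s|^2] \leq C(|x-z|^2+t+t^2)$, which produces the polynomial prefactor $(1+t^2)$.

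The main obstacle is that $\rho_R$ is only a modulus of continuity, without any a priori quantitative rate of decay near $0$; consequently the localisation radius $r_0$ and the Lyapunov construction must be carried out without explicit control of $\rho_R$, and the excursions of $R_t$ outside $\{R_t\leq r_0\}$ can be handled only by a coarse second-moment bound. This coarseness is precisely what produces the polynomial factor $(1+t^2)$ in the final estimate; a more careful martingale computation of $P(\tau_{r_0}<\tau)$ using the already-established negative drift of $R_t^{2\alpha}$ could be substituted to sharpen the $t$-dependence if desired.
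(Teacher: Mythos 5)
Your Lyapunov computation for the localized coupling time is sound and is essentially the Lindvall--Rogers mechanism the paper imports from the proof of Lemma 4.2 in \cite{Sei010-1}; applying It\^o to $R_t^{2\alpha}$ with $\alpha=(1-\varepsilon)/2$ to get a strictly negative principal drift, absorbing the $\rho_R$-errors on $\{R_t\le r_0\}$, and using Dynkin's formula to conclude $E[t\wedge\tau\wedge\tau_{r_0}]\le C|x-z|^{1-\varepsilon}$ is a correct way to produce the a priori estimate, and it parallels the paper's (referenced) argument, which instead applies It\^o to $|\xi_t|$ and runs a sub/super-martingale argument with an explicit concave $f$. The real trouble is in the passage from the stopped quantity to $E[t\wedge\tau]$, and that step as written does not close.

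Two concrete problems. First, your excursion estimate fails to produce the $|x-z|^{1-\varepsilon}$ factor: with $E[\sup_{s\le t}|Y_s|^2]\le C(|x-z|^2+t+t^2)$ (which is correct, but comes from boundedness of the coefficients, not Gronwall), Markov's inequality gives $P(\tau_{r_0}<t)\le Cr_0^{-2}(|x-z|^2+t+t^2)$, which does \emph{not} vanish as $|x-z|\to 0$ for any fixed $t>0$ because of the diffusive $t+t^2$ term; hence $tP(\tau_{r_0}<t)$ is $O(t^2)$ rather than $O((1+t^2)|x-z|^{1-\varepsilon})$, and the claimed prefactor $(1+t^2)$ is not in fact produced this way. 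Second, your drift estimate controls $M_t^{(1)}$ by $\rho_R(R_t)$, but (\ref{eq:sigma}) is only valid when both $X_t^x$ and $Z_t^z$ lie in $B(0;R)$, so the localization must also include the exit time of $X^x$ and $Z^z$ from $B(0;R)$, and the exit probability of a single process from a fixed ball over a fixed time window is likewise not small in $|x-z|$. The $R_t^{2\alpha}$-supermartingale bound you mention at the end would control $P(\tau_{r_0}<\tau)$, but it says nothing about spatial exit from $B(0;R)$. The paper's route around both obstacles is to prove the bound first only for $t\le c_0(-\log|x-z|)^{-1}$, on which horizon the tail estimate of Proposition \ref{prop:appendix1} makes $P(\sup_{s\le t}|X_s^\eta-\eta|>\delta_1)\le C|x-z|$ for $\eta=x,z$ --- a single bound that covers both the escape of $R_t$ to $3\delta_0$ and the exit from $B(0;R)$ --- and then boost to all $t\in[0,T]$ via Chebyshev, $E[t\wedge\tau]\le(1+t(-\log|x-z|)/c_0)E[(c_0(-\log|x-z|)^{-1})\wedge\tau]$, absorbing the logarithm into a slightly smaller H\"older exponent. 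That two-step structure (log-small $t$ first, then Chebyshev boost) is the idea your argument is missing.
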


\begin{proof}
We remark that the proof is almost same as the proof of Lemma 4.2 in \cite{Sei010-1}.
Let $R>0$ and $x,z \in B(0;R/2)$.
Define
\[
\xi _t := X_t^x-Z_t^z, \quad \alpha _t := \sigma (T-t,X_t^x) - \sigma (T-t,Z_t^z)H_t, \quad \beta _t := b(T-t,X_t^x) -b(T-t,Z_t^z).
\]
Then, by It\^o's formula we have for $t\in [0,\tau)$
\begin{equation}\label{eq:xi}
d(|\xi _t|) = \left\langle \frac{\xi _t}{|\xi _t|}, \alpha _t dB_t \right\rangle + \frac 1{2|\xi _t|} \left( {\rm tr}( \alpha _t \alpha _t ^T) - \frac{|\alpha _t^T \xi _t|^2}{|\xi _t|^2} + \left\langle \xi _t, \beta _t \right\rangle \right) dt 
\end{equation}
where ${\rm tr}(A)$ is the trace of the matrix $A$.
By following the argument in the proof of Lemma 4.2 in \cite{Sei010-1},
we have a positive constant $\gamma _1$ depending on $d$ and $\Lambda$ such that
\begin{equation}\label{eq:coup01}
\left| {\rm tr}( \alpha _t \alpha _t ^T) - \frac{|\alpha _t^T \xi _t|^2}{|\xi _t|^2} + \left\langle \xi _t, \beta _t \right\rangle\right| \leq \gamma _1 \rho _R (|\xi _t|) ,\ t\in [0,\tau)\ \mbox{such that}\ X_t^x, Z_t^z \in B(0;R),
\end{equation}
and we also have a positive constant $\gamma _2$ depending on $d$ and $\Lambda$ such that
\begin{equation}\label{eq:coup02}
\frac{|\alpha _t ^T \xi _t|}{|\xi_t|} \geq \gamma_2 ^{-1} \ \mbox{for}\ t\in [0,\tau)\ \mbox{such that}\ |\sigma (T-t,X_t^x)-\sigma (T-t,Z_t^z)|\leq 2\Lambda ^{-1}.
\end{equation}
Note that if $\rho _R (|\xi _t|) \leq 2\Lambda ^{-1}$ and $X_t^x, Z_t^z\in B(0;R)$, then $|\sigma (T-t,X_t^x)-\sigma (T-t,Z_t^z)|\leq 2\Lambda ^{-1}$.
Let $\gamma := \gamma _1 \vee \gamma _2$.
For given $\varepsilon >0$, let
\[
\tilde \tau := \tau \wedge \inf \left\{ t\in [0,T];\ \rho _R (|\xi _t|) > \frac{\varepsilon}{2\gamma ^3} \wedge 2\Lambda ^{-1},\ X_t^x \not \in B(0;R) \ \mbox{or}\   Z_t^z \not \in B(0;R) \right\}
\]
Then, by following the argument in the proof of Lemma 4.2 in \cite{Sei010-1} again, we obtain
\begin{equation}\label{eq:est-nhit}
E\left[ t\wedge \tilde \tau \right] \leq C|x-z|^{1-\varepsilon} \quad \mbox{for}\ t\in [0,T]
\end{equation}
where $C$ is a constant depending on $d$, $\Lambda$, $\varepsilon$, $R$ and $\| b\| _\infty$.

Now we consider the estimate of the expectation of $\tau$ by using that of $\tilde \tau$.
We remark that, the following argument is almost same as the argument in the proof of Lemma 4.2 in \cite{Sei010-1}, however the estimate which we will obtain here is a little better than the estimate obtained in the proof of Lemma 4.2 in \cite{Sei010-1}.
To simplify the notation, let
\[
\delta _0 := \frac 13 \rho _R ^{-1} \left( \frac{\varepsilon}{2\gamma ^3} \wedge 2\Lambda ^{-1}\right) .
\]
Since
\begin{align*}
& |\xi _t| > 3\delta _0 \Longrightarrow |X_t^x -x| >\delta _0 ,\ |Z_t^z -z| >\delta _0,\ \mbox{or}\ |x-z|>\delta _0,\\
& X_t^x \not \in B(0;R) \ \mbox{or}\   Z_t^z \not \in B(0;R) \Longrightarrow |X_t^x -x| >\frac R2 \ \mbox{or}\ |Z_t^z -z| >\frac R2,
\end{align*}
we have for $x,z \in B(0;R/2)$ such that $|x-z| \leq \delta _0$,
\begin{equation}\label{eq:estprob1-2}\begin{array}{l}
\displaystyle P(\tau \geq t) \leq P(\tilde \tau \geq t) +P\left( \sup _{s\in [0,t]} |X_s^x -x| >\delta _0 \wedge \frac R2 \right) \\
\displaystyle \hspace{6cm} +P\left( \sup _{s\in [0,t]} |Z_s^z -z| >\delta _0 \wedge \frac R2 \right) .
\end{array}\end{equation}
Let $\eta = x$ or $z$, and let $\delta _1:= \delta _0 \wedge (R/2)$.
Proposition \ref{prop:appendix1} implies that
\begin{align*}
& P\left( \sup _{s\in [0,t]}|X_s^\eta -\eta | > \delta _1 \right) \\
& \leq P\left( \sup _{s\in [0,t]} \int _0^s \sigma (T-u,X^\eta _u) dB_u > \frac{\delta _1}{2} \right) + P\left( \sup _{s\in [0,t]}\left( -\int _0^s \sigma (T-u,X^\eta _u) dB_u \right) > \frac{\delta _1}{2} \right)\\
& \quad + P\left( \sup _{s\in [0,t]}\left| \int _0^s b(T-u,X^\eta _u) du \right| > \frac{\delta _1}{2} \right) \\
& \leq C_1\sqrt{t}\exp \left( -C_2t^{-1}\right) + P\left( \| b\| _\infty t > \frac{\delta _1}{2} \right)
\end{align*}
where $C_1$, $C_2$ are constants depending on $d$, $\Lambda$, $\varepsilon$, $R$ and $\rho _R$.
Hence, there exist positive constants $c$ and $C$ depending on $d$, $\Lambda$, $\varepsilon$, $R$, $\rho _R$ and $\| b\| _\infty$ such that
\begin{equation}\label{eq:estprob1-1}
P\left( \sup _{s\in [0,t]}|X_s^\eta -\eta | >\delta _1 \right) \leq C|x-z|
\end{equation}
for $\eta = x,z$ and $t\in [0, c(-\log |x-z| )^{-1} \wedge 1)]$.
By (\ref{eq:est-nhit}), (\ref{eq:estprob1-2}) and (\ref{eq:estprob1-1}) we have positive constants $c$ and $C$ depending on $d$, $\Lambda$, $\varepsilon$, $R$, $\rho _R$ and $\| b\| _\infty$ such that; for $x,z \in B(0;R/2)$ satisfying $|x-z| \leq c$, and $t\in [0, c(-\log |x-z| )^{-1} ]$
\begin{align*}
&E[t\wedge \tau] \\
&\leq \int _0^t P(\tau \geq s) ds \\
&\leq \int _0^t P( \tilde \tau \geq s) ds + t \left[ P\left( \sup _{s\in [0,t]}|X_s^x -x| >\delta _1 \right) + P\left( \sup _{s\in [0,t]}|X_s^z -z| >\delta _1 \right) \right] \\
&\leq C(1+t)|x-z|^{1-\varepsilon} .
\end{align*}
Therefore, we obtain
\begin{equation}\label{eq:estprob1-9}
E[t\wedge \tau] \leq C(1+t)|x-z|^{1-\varepsilon}
\end{equation}
for $x,z \in B(0;R/2)$ such that $|x-z| \leq c_0$, and $t\in [0, c_0(-\log |x-z| )^{-1} ]$ where $c_0$ and $C$ are positive constants depending on $d$, $\Lambda$, $\varepsilon$, $R$, $\rho _R$ and $\| b\| _\infty$.
By using Chebyshev's inequality, for $|x-z|\leq c_0$ calculate $E[t\wedge \tau]$ as 
\begin{align*}
E[t\wedge \tau]
& = \int _0^{c_0(-\log |x-z| )^{-1}} P(\tau \geq s) ds +\int _{c_0(-\log |x-z| )^{-1}}^t P(\tau \geq s) ds\\
& \leq E[ (c_0(-\log |x-z| )^{-1}) \wedge \tau] + t P(\tau \geq c_0(-\log |x-z| )^{-1} )\\
& \leq E[(c_0(-\log |x-z| )^{-1}) \wedge \tau] + \frac{t(-\log |x-z| )}{c_0} E[ \tau \wedge (c_0(-\log |x-z| )^{-1})] \\
&\leq \left( 1+ \frac{t(-\log |x-z| )}{c_0} \right) E[(c_0(-\log |x-z| )^{-1}) \wedge \tau].
\end{align*}
Thus, applying (\ref{eq:estprob1-9}) for $t=c_0(-\log |x-z| )^{-1}$ and choosing another small $\varepsilon$, we obtain (\ref{eq:estprob1-0}) for all and $x,z \in B(0;R/2)$ such that $|x-z| \leq c_0$.
The argument of the compactness enables us to remove the condition that $|x-z| \leq c_0$, and the desired assertion holds.
\end{proof}

\begin{lem}\label{lem:estprob1-2}
Additionally assume that $\int _0^1 r^{-1}\rho _R (r) dr <\infty$ for all $R>0$.
Then, for $R>0$ there exists a positive constant $C$ depending on $d$, $\Lambda$, $R$, $\rho _R$ and $\| b\| _\infty$ such that
\begin{equation}\label{eq:estprob1-2-0}
E[t\wedge \tau] \leq C(1+t^2)|x-z| \max\{ 1, -\log |x-z|\}
\end{equation}
for $t\in [0,T]$ and $x,z \in B(0;R/2)$.
\end{lem}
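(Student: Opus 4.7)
The plan is to follow the proof of Lemma \ref{lem:estprob1} essentially verbatim, upgrading only the fundamental stopped-expectation estimate (\ref{eq:est-nhit}). Under the Dini hypothesis I aim to establish
\[
E[t\wedge \tilde\tau]\leq C|x-z|,\qquad t\in[0,T],
\]
with $C=C(d,\Lambda,R,\rho_R,\|b\|_\infty)$, where $\tilde\tau$ is taken exactly as in the proof of Lemma \ref{lem:estprob1} but with the $\varepsilon$-dependent threshold $\varepsilon/(2\gamma^3)$ replaced by a fixed constant, e.g. $2\Lambda^{-1}$. Once this linear-in-$|x-z|$ bound is available, the remaining steps of the proof of Lemma \ref{lem:estprob1} -- comparing $\tau$ with $\tilde\tau$ via the exit probabilities of $X^\eta$ from $B(0;R)$ (Proposition \ref{prop:appendix1} and (\ref{eq:estprob1-1})), the Chebyshev bootstrap at the end of that proof (starting from (\ref{eq:estprob1-9})) from $t\leq c(-\log|x-z|)^{-1}$ to arbitrary $t\in[0,T]$, and the final compactness step removing $|x-z|\leq c_0$ -- go through unchanged. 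Feeding the linear small-time bound $E[t\wedge \tau]\leq C(1+t)|x-z|$ (in place of $C(1+t)|x-z|^{1-\varepsilon}$) into the same Chebyshev bootstrap produces exactly $C(1+t^2)|x-z|\max\{1,-\log|x-z|\}$; the logarithmic factor is precisely the price the bootstrap charges for extending a linear-in-$|x-z|$ estimate from small to arbitrary $t\leq T$.

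To obtain the linear bound I would apply It\^o's formula to $\phi(|\xi_t|)$ for a concave, increasing test function $\phi\in C^2([0,\infty))$ with $\phi(0)=0$ and $\phi(r)\asymp r$ on $[0,\delta_0]$, where $\delta_0:=\rho_R^{-1}(2\Lambda^{-1})$. A convenient choice is the solution on $[0,\delta_0]$ of the linear ODE
\[
\phi''(r)\gamma_2^{-2}+\phi'(r)\bigl(\gamma_1 r^{-1}\rho_R(r)+C_0\bigr)=-2,
\]
with $\gamma_1,\gamma_2$ as in (\ref{eq:coup01})--(\ref{eq:coup02}), $C_0$ a constant absorbing the $\|b\|_\infty$-contribution to the drift of $|\xi_t|$, and $\phi'(0)$ taken large enough that $\phi'>0$ throughout $[0,\delta_0]$; the function is then smoothly extended to $[\delta_0,\infty)$ with the same sign conditions on $\phi'$ and $\phi''$. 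An integrating-factor analysis with $\nu(r):=\gamma_2^2\int_0^r(\gamma_1 s^{-1}\rho_R(s)+C_0)\,ds$, together with the Dini integrability $\int_0^{\delta_0}s^{-1}\rho_R(s)\,ds<\infty$, shows that $\nu$ is bounded on $[0,\delta_0]$, so $\phi'$ is pinched between two positive constants there, and consequently $\phi(r)\asymp r$ on $[0,\delta_0]$.

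With this choice of $\phi$, applying It\^o's formula via (\ref{eq:xi}), the upper bound (\ref{eq:coup01}), the lower bound (\ref{eq:coup02}), and the concavity $\phi''\leq 0$ shows that the drift of $\phi(|\xi_t|)$ on $[0,\tilde\tau]$ is pointwise bounded above by $-1$; this is exactly what the ODE was designed to ensure. Hence $\phi(|\xi_{t\wedge\tilde\tau}|)+(t\wedge\tilde\tau)$ is a local supermartingale and optional stopping yields
\[
E[t\wedge\tilde\tau]\leq \phi(|x-z|)-E[\phi(|\xi_{t\wedge\tilde\tau}|)]\leq \phi(|x-z|)\leq C|x-z|,
\]
as required.

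The main technical obstacle will be verifying this pointwise drift inequality. Concretely, one must combine the negative term $\tfrac12\phi''(|\xi_t|)|\alpha_t^T\xi_t|^2/|\xi_t|^2$, which by (\ref{eq:coup02}) and $\phi''\leq 0$ is at most $\tfrac12\phi''(|\xi_t|)\gamma_2^{-2}$, with the positive $\rho_R$-contribution controlled by (\ref{eq:coup01}) weighted by $\phi'(|\xi_t|)/(2|\xi_t|)$ and the $\|b\|_\infty$-contribution from the $\beta$-drift, and check that the defining ODE absorbs all three with the uniform margin $-1$. The Dini integrability of $r^{-1}\rho_R(r)$ enters precisely at this point: it keeps the integrating factor $e^{\nu(r)}$ bounded, hence $\phi(r)\leq Cr$, which is exactly the improvement from $|x-z|^{1-\varepsilon}$ to $|x-z|$ in the basic estimate that distinguishes the present lemma from Lemma \ref{lem:estprob1}.
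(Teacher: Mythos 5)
Your approach is correct and is essentially the one the paper uses: both arguments drop the $\varepsilon$-dependent threshold in $\tilde\tau$ in favour of $2\Lambda^{-1}$, construct a concave comparison function $\phi$ with $\phi(r)\asymp r$ via an integrating factor that stays bounded precisely because $\int_0^{\cdot} s^{-1}\rho_R(s)\,ds<\infty$, obtain $E[t\wedge\tilde\tau]\leq C|x-z|$ by It\^o's formula and optional stopping (the paper regularises near $\xi=0$ with the extra stopping times $\tau_n$, a technicality you elide), and then run the same Chebyshev bootstrap from Lemma~\ref{lem:estprob1}. The only cosmetic difference is that the paper takes $f'(\eta)=\exp\bigl(-\int_0^\eta 2\gamma^3\theta^{-1}\rho_R(\theta)\,d\theta\bigr)$, i.e.\ the homogeneous version of your ODE, and then separately argues that the resulting drift $-f'(\eta)\gamma\rho_R(\eta)/(2\eta)$ is bounded away from zero, whereas your non-homogeneous ODE with the $-2$ right-hand side builds the uniform bound $\leq-1$ directly into the construction.
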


\begin{proof}
Let $R>0$ and $x,z \in B(0;R/2)$.
We define $\xi _t$, $\alpha _t$, $\beta _t$ as same as in the proof of Lemma \ref{lem:estprob1}.
By the same way as the proof of Lemma \ref{lem:estprob1}, we obtain the constants $\gamma _1$ and $\gamma _2$ satisfying  (\ref{eq:coup01}) and (\ref{eq:coup02}).
Let $\gamma := \gamma _1 \vee \gamma _2$ and define stopping times $\tau _n$ by $\tau _n := \inf \{ t>0; |X_t^x -Z_t^z| \leq 1/n \}$ for $n\in {\mathbb N}$, and
\begin{align*}
\tilde \tau &:= \tau \wedge \inf \left\{ t\in [0,\infty);\ \rho _R (|\xi _t|) > 2\Lambda ^{-1},\ X_t^x \not \in B(0;R) \ \mbox{or}\   Z_t^z \not \in B(0;R) \right\} \\
\tilde \tau _n &:= \tau _n \wedge \inf \left\{ t\in [0,\infty);\ \rho _R (|\xi _t|) > 2\Lambda ^{-1},\ X_t^x \not \in B(0;R) \ \mbox{or}\   Z_t^z \not \in B(0;R) \right\}
\end{align*}
for $n\in {\mathbb N}$.
Define a function $f$ on $[0, \infty)$ by
\[
f(\eta) := \int _0^\eta \exp \left( -\int _0^{\theta_1} \frac{2\gamma ^3 \rho _R(\theta _2)}{\theta _2} d\theta _2 \right) d\theta _1 ,\quad \eta \in [0,\infty ).
\]
Then, by It\^o's formula, (\ref{eq:xi}), (\ref{eq:coup01}) and (\ref{eq:coup02}), we have
\begin{align*}
&E[f(|\xi _{t\wedge \tilde \tau _n} |)] -f(|x-z|)\\
&= E\left[ \int _0^{t\wedge \tilde \tau _n} \left\{ \frac {f'(|\xi _s|)}{2|\xi _s|} \left( {\rm tr}( \alpha _s \alpha _s ^T) - \frac{|\alpha _s^T \xi _s|^2}{|\xi _s|^2} + \langle \xi _s , \beta _s \rangle \right) + \frac 12 f''(|\xi _s|) \frac{|\alpha _s^T \xi _s|^2}{|\xi _s|^2} \right\} ds \right] \\
&= E\left[ \int _0^{t\wedge \tilde \tau _n} \left\{ \frac {f'(|\xi _s|)}{2|\xi _s|} \left( {\rm tr}( \alpha _s \alpha _s ^T) - \frac{|\alpha _s^T \xi _s|^2}{|\xi _s|^2} + \langle \xi _s , \beta _s \rangle \right) - \frac {\gamma ^3 \rho _R(|\xi _s|)}{|\xi _s|} f'(|\xi _s|) \frac{|\alpha _s^T \xi _s|^2}{|\xi _s|^2} \right\} ds \right] \\
& \leq - E\left[ \int _0^{t\wedge \tilde \tau _n} f'(|\xi _s|) \frac {\gamma \rho _R(|\xi _s|)}{2|\xi _s|} ds \right] .
\end{align*}
Since 
\[
\inf _{\eta \in (0,\rho _R^{-1}(2\Lambda ^{-1}))} f'(\eta ) \frac {\gamma \rho _R(\eta)}{2\eta} >0,
\]
it holds that
\[
E\left[ \int _0^{t\wedge \tilde \tau _n} f'(|\xi _s|) \frac {\gamma \rho _R(|\xi _s|)}{2|\xi _s|} ds \right] \geq CE[t\wedge \tilde \tau _n]
\]
where $C$ is a positive constant depending on $d$, $R$, $\rho _R$, $\Lambda$ and $\| b\| _\infty$.
Hence,
\begin{equation}\label{eq:estprob1-2-7}
E[f(|\xi _{t\wedge \tilde \tau _n} |)] + CE[t\wedge \tilde \tau _n] \leq f(|x-z|) .
\end{equation}
Thus, by using the nonnegativity of $f$ and taking limit as $n\rightarrow \infty$, we obtain
\begin{equation}\label{eq:est-nhit2}
E\left[ t\wedge \tilde \tau \right] \leq C|x-z| \quad \mbox{for}\ t\in [0,\infty )
\end{equation}
where $C$ is a constant depending on $d$, $\Lambda$, $R$ and $\| b\| _\infty$.
By following the proof of Lemma \ref{lem:estprob1} with applying (\ref{eq:est-nhit2}) instead of (\ref{eq:est-nhit}), we have
\begin{equation}\label{eq:estprob1-2-9}
E[t\wedge \tau] \leq C(1+t)|x-z|
\end{equation}
for $x,z \in B(0;R/2)$ such that $|x-z| \leq c_0$, and $t\in [0, c_0(-\log |x-z| )^{-1} ]$ where $c_0$ and $C$ are positive constants depending on $d$, $\Lambda$, $\varepsilon$, $R$, $\rho _R$ and $\| b\| _\infty$.
Since, as in the proof of Lemma \ref{lem:estprob1}, it holds that
\[
E[t\wedge \tau] \leq \left( 1+ \frac{t(-\log |x-z| )}{c_0} \right) E[(c_0(-\log |x-z| )^{-1}) \wedge \tau],
\]
by applying (\ref{eq:estprob1-2-9}) with $t=c_0(-\log |x-z| )^{-1}$ we obtain the assertion.
\end{proof}

\begin{lem}\label{lem:estprob1-3}
If $\int _0^1 r^{-1}\rho (r) dr <\infty$, 
then there exists a positive constant $C$ depending on $d$, $\rho$, $\Lambda$ and $\| b\| _\infty$ such that
\begin{equation}\label{eq:estprob1-3-0}
E[t\wedge \tau] \leq C(1+t)|x-z|
\end{equation}
for $t\in [0,T]$ and $x,z \in {\mathbb R}^d$.
\end{lem}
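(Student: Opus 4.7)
The plan is to repeat the Lyapunov argument of Lemma \ref{lem:estprob1-2}, taking advantage of two simplifications that the global Dini assumption affords. First, the modulus $\rho$ is available uniformly on all of $\mathbb{R}^d$, so the coupling estimates (\ref{eq:coup01})--(\ref{eq:coup02}) used in the previous lemma now hold for every $t\in[0,\tau)$ without any constraint of the form $X^x_t,Z^z_t\in B(0;R)$. Second, the single It\^o inequality produced by the Lyapunov function will simultaneously yield both the bound on $E[t\wedge\tilde\tau]$ and the escape probability needed to pass from $\tilde\tau$ to $\tau$, so there is no need to invoke Lemma \ref{lem:estprob1} or the Chebyshev-type bootstrap that generated the factors $(1+t^2)$ and $\max\{1,-\log|x-z|\}$ in Lemma \ref{lem:estprob1-2}.

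Keep the notation $\xi_t,\alpha_t,\beta_t,\gamma_1,\gamma_2$ from the proof of Lemma \ref{lem:estprob1-2}, set $\gamma:=\gamma_1\vee\gamma_2$ and $M:=\rho^{-1}(2\Lambda^{-1})$, and introduce $\tau_n:=\inf\{t>0:|\xi_t|\le 1/n\}$, $\tilde\tau_n:=\tau_n\wedge\inf\{t\ge 0:|\xi_t|>M\}$, and $\tilde\tau:=\tau\wedge\inf\{t\ge 0:|\xi_t|>M\}$. The very same Lyapunov function
$$f(\eta):=\int_0^\eta\exp\!\left(-\int_0^{\theta_1}\frac{2\gamma^3\rho(\theta_2)}{\theta_2}\,d\theta_2\right)d\theta_1$$
is well-defined and $C^2$ on $(0,\infty)$ by the Dini integrability, and it satisfies $f(0)=0$, $0<f'(\eta)\le f'(0)=1$, hence $f(\eta)\le\eta$. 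Because (\ref{eq:coup01})--(\ref{eq:coup02}) are now available globally, the It\^o computation of Lemma \ref{lem:estprob1-2} applies on $[0,t\wedge\tilde\tau_n]$ without change and gives
$$E\bigl[f(|\xi_{t\wedge\tilde\tau_n}|)\bigr]+c\,E[t\wedge\tilde\tau_n]\le f(|x-z|)\le|x-z|$$
with $c>0$ depending only on $d,\Lambda,\rho,\|b\|_\infty$. Letting $n\to\infty$ (with $|\xi_{t\wedge\tilde\tau_n}|$ bounded in $[0,M]$, so dominated convergence applies) and using $f\ge 0$ yields $E[t\wedge\tilde\tau]\le C|x-z|$ for every $t\in[0,T]$.

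To replace $\tilde\tau$ by $\tau$, observe that on the event $\{\tilde\tau<\tau,\ \tilde\tau\le t\}$ one has $|\xi_{\tilde\tau}|=M$ by continuity of $|\xi|$, so $f(|\xi_{t\wedge\tilde\tau}|)\ge f(M)>0$ there while the integrand is nonnegative elsewhere. The same It\^o inequality therefore forces
$$f(M)\,P(\tilde\tau<\tau,\ \tilde\tau\le t)\le E\bigl[f(|\xi_{t\wedge\tilde\tau}|)\bigr]\le|x-z|,$$
and combining this with the deterministic bound $(t\wedge\tau)-(t\wedge\tilde\tau)\le t\,\mathbf{1}_{\{\tilde\tau<\tau,\,\tilde\tau\le t\}}$ produces
$$E[t\wedge\tau]\le E[t\wedge\tilde\tau]+t\,P(\tilde\tau<\tau,\ \tilde\tau\le t)\le C(1+t)|x-z|$$
whenever $|x-z|<M$; the complementary range $|x-z|\ge M$ follows trivially from $E[t\wedge\tau]\le t\le(t/M)|x-z|\le(1/M)(1+t)|x-z|$. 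The only mildly delicate point is the passage $n\to\infty$ inside the It\^o estimate, but this is routine given that $|\xi_{t\wedge\tilde\tau_n}|$ is bounded by $M$ and $f$ is continuous; crucially, no bootstrap in $t$ or in $|x-z|$ is required, which is precisely the improvement over Lemma \ref{lem:estprob1-2} and the reason the bound here reads $C(1+t)|x-z|$ rather than $C(1+t^2)|x-z|\max\{1,-\log|x-z|\}$.
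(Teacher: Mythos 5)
Your proof is correct and takes essentially the same approach as the paper: you use the identical Lyapunov function $f$, apply It\^o's formula up to the localized stopping time $\tilde\tau_n$, pass to the limit, and then use the bound $E[f(|\xi_{t\wedge\tilde\tau}|)]\le C|x-z|$ to control the escape probability $P(\tilde\tau<\tau\wedge t)$ before combining it with $E[t\wedge\tilde\tau]\le C|x-z|$. The only cosmetic difference is that the paper bounds $E[t\wedge\tau]=\int_0^t P(\tau\ge s)\,ds$ by decomposing the event $\{\tau\ge s\}$, whereas you use the equivalent pathwise inequality $(t\wedge\tau)-(t\wedge\tilde\tau)\le t\,\mathbf{1}_{\{\tilde\tau<\tau,\,\tilde\tau\le t\}}$; both yield the same combination $E[t\wedge\tilde\tau]+tP(\tilde\tau<\tau\wedge t)$.
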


\begin{proof}
We define $\xi _t$, $\alpha _t$, $\beta _t$ as same as in the proof of Lemma \ref{lem:estprob1}.
Similarly to the proof of Lemma \ref{lem:estprob1}, we obtain a positive constant $\gamma _1$ depending on $d$ and $\Lambda$ such that
\begin{equation}\label{eq:coup01-3}
\left| {\rm tr}( \alpha _t \alpha _t ^T) - \frac{|\alpha _t^T \xi _t|^2}{|\xi _t|^2} + \left\langle \xi _t, \beta _t \right\rangle\right| \leq \gamma _1 \rho (|\xi _t|) ,\ t\in [0,\tau),
\end{equation}
and (\ref{eq:coup02}).
Let $\gamma := \gamma _1 \vee \gamma _2$.
Note that (\ref{eq:coup01-3}) holds without the condition $X_t^x, Z_t^z \in B(0;R)$.

Since (\ref{eq:coup01-3}) holds without the condition $X_t^x, Z_t^z \in B(0;R)$, a similar argument to the proof of Lemma \ref{lem:estprob1-2} is available with respect to $\tilde \tau$ and $\tilde \tau _n$ defined by $\tau _n := \inf \{ t>0; |X_t^x -Z_t^z| \leq 1/n \}$ for $n\in {\mathbb N}$,
and
\begin{align*}
\tilde \tau &:= \tau \wedge \inf \left\{ t\in [0,\infty);\ \rho  (|\xi _t|) > 2\Lambda ^{-1} \right\} \\
\tilde \tau _n &:= \tau _n \wedge \inf \left\{ t\in [0,\infty);\ \rho (|\xi _t|) > 2\Lambda ^{-1} \right\}
\end{align*}
for $n\in {\mathbb N}$.
From this fact, we obtain (\ref{eq:estprob1-2-7}) with respect to $\tilde \tau _n$ defined in the present proof, and hence,
\begin{align}
E\left[ t\wedge \tilde \tau \right] &\leq C|x-z| \quad \mbox{for}\ t\in [0,T] \label{eq:est-nhit3-1}\\
E[f(|\xi _{t\wedge \tilde \tau} |)] &\leq C|x-z| \quad \mbox{for}\ t\in [0,T] \label{eq:est-nhit3-2}
\end{align}
where $f$ is the same function defined in the proof of Lemma \ref{lem:estprob1-2} and $C$ is a constant depending on $d$, $\Lambda$ and $\| b\| _\infty$.
The nonnegativity of $f$ and (\ref{eq:est-nhit3-2}) imply that for $t\in [0,T]$
\[
f\circ \rho ^{-1}(2\Lambda ^{-1}) P(\tilde \tau <\tau \wedge t) \leq E[f(|\xi _{t\wedge \tilde \tau } |)] \leq C|x-z|
\]
where $C$ is the constant appeared in (\ref{eq:est-nhit3-2}).
Hence, we have
\begin{equation}
P(\tilde \tau <\tau \wedge t) \leq C|x-z| \quad \mbox{for}\ t\in [0,\infty ) \label{eq:estprob1-3-1}
\end{equation}
where $C$ is a constant depending on $d$, $\Lambda$ and $\| b\| _\infty$.
On the other hand, for $t\in [0,T]$ it holds that
\begin{align*}
E[t\wedge \tau] &= \int _0^t P(\tau \geq s) ds\\
&\leq \int _0^t P( \tilde \tau \geq s) ds + \int _0^t P(\tilde \tau <\tau \wedge s) ds \\
&\leq E[t \wedge \tilde \tau] + t P(\tilde \tau <\tau \wedge t) .
\end{align*}
Therefore, we obtain the assertion from this inequality, (\ref{eq:est-nhit3-1}) and (\ref{eq:estprob1-3-1}).
\end{proof}

\begin{lem}\label{lem:estprob3}
For $p\in (1,\infty)$ it holds that
\begin{align*}
& \left| \int _{{\mathbb R}^d} f(y)p^X(0,x;T,y)E^{X_T^x = y}[T\wedge \tau] dy \right| \\
& \leq (s+T) s^{-1/p} E\left[ s\wedge \tau\right] ^{1/p} E\left[ \left( \int _{{\mathbb R}^d} |f(y)| p^X\left( s,X^x_s ;T,y\right) dy \right) ^{p^*}\right] ^{1/p^*}
\end{align*}
for $f\in C_b({\mathbb R}^d)$, $s \in (0,T)$, and $x,z\in {\mathbb R}^d$.
\end{lem}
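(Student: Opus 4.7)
The plan is first to recognize the left-hand side as a plain expectation. Since $T\wedge\tau$ is ${\mathscr F}_T$-measurable (as $\tau$ is a stopping time bounded by $T$), the definition of the conditional probability $P^{X_T^x=y}$, together with the fact that $p^X(0,x;T,y)$ is the density of the law of $X_T^x$, gives
\begin{equation*}
\int_{{\mathbb R}^d} f(y)\,p^X(0,x;T,y)\,E^{X_T^x=y}[T\wedge\tau]\,dy = E\bigl[f(X_T^x)(T\wedge\tau)\bigr].
\end{equation*}
So the task reduces to bounding $\bigl|E[f(X_T^x)(T\wedge\tau)]\bigr|$.

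The central observation, which produces both powers of $s$ in one shot, is the deterministic inequality
\begin{equation*}
T\wedge\tau \;\leq\; \frac{s+T}{s}\,(s\wedge\tau),\qquad s\in (0,T].
\end{equation*}
On the event $\{\tau\leq s\}$ we have $T\wedge\tau=\tau=s\wedge\tau$, and the right-hand side is at least $(s\wedge\tau)$. On $\{\tau>s\}$ we have $s\wedge\tau=s$, so the right-hand side equals $s+T\geq T\geq T\wedge\tau$. Applying this pointwise inside the expectation gives
\begin{equation*}
\bigl|E[f(X_T^x)(T\wedge\tau)]\bigr| \;\leq\; \frac{s+T}{s}\,E\bigl[|f(X_T^x)|\,(s\wedge\tau)\bigr].
\end{equation*}

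Since $s\wedge\tau$ is ${\mathscr F}_s$-measurable, I condition on ${\mathscr F}_s$ and use the Markov property of $X^x$ (guaranteed by the smoothness hypotheses of this section), for which $E[|f(X_T^x)|\mid {\mathscr F}_s]=\int_{{\mathbb R}^d}|f(y)|\,p^X(s,X_s^x;T,y)\,dy$. This yields
\begin{equation*}
E\bigl[|f(X_T^x)|\,(s\wedge\tau)\bigr] \;=\; E\!\left[(s\wedge\tau)\int_{{\mathbb R}^d}|f(y)|\,p^X(s,X_s^x;T,y)\,dy\right].
\end{equation*}
Now Hölder's inequality with exponents $p$ and $p^*$ separates the two factors, and since $s\wedge\tau\leq s$ we have the elementary bound $(s\wedge\tau)^p\leq s^{p-1}(s\wedge\tau)$, giving
\begin{equation*}
E[(s\wedge\tau)^p]^{1/p}\;\leq\; s^{1-1/p}\,E[s\wedge\tau]^{1/p}.
\end{equation*}

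Combining with the prefactor $(s+T)/s$ collapses $(s+T)s^{-1}\cdot s^{1-1/p} = (s+T)s^{-1/p}$, which is exactly the claimed constant. I do not foresee any substantive obstacle; the argument is essentially a bookkeeping exercise, and the only non-routine step is spotting the deterministic comparison $T\wedge\tau \leq \tfrac{s+T}{s}(s\wedge\tau)$, which allows both estimates on $s\wedge\tau$ (control of $\tau$ when small, and the event $\{\tau>s\}$) to be absorbed into a single application of Hölder's inequality.
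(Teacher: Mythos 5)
Your proof is correct and arrives at exactly the constant $(s+T)s^{-1/p}$ stated in the lemma, but it takes a genuinely different and somewhat slicker route than the paper. The paper first disintegrates into the parts where $\{\tau\leq s\}$ and $\{\tau>s\}$ (display (\ref{eq:estprob3-01})), handles the first piece via the conditional identity (\ref{eq:conditional}) together with the bound $E[(t\wedge\tau)^p{\mathbb I}_{[0,s]}(\tau)]\leq s^{p-1}E[s\wedge\tau]$ and H\"older, and handles the second piece by bounding $T\wedge\tau\leq T$ there and then applying Chebyshev $P(\tau>s)\leq s^{-1}E[s\wedge\tau]$ before H\"older; the two pieces sum to $(s^{1-1/p}+Ts^{-1/p})E[s\wedge\tau]^{1/p}=(s+T)s^{-1/p}E[s\wedge\tau]^{1/p}$. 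You instead spot the pointwise inequality $T\wedge\tau\leq\frac{s+T}{s}(s\wedge\tau)$, which folds the case split and the Chebyshev step into a single deterministic comparison, after which one application of the tower property (legitimate since $s\wedge\tau$ is ${\mathscr F}_s$-measurable and the Markov property holds in the smooth setting of Section~\ref{sec:representation}), one application of H\"older, and the same $(s\wedge\tau)^p\leq s^{p-1}(s\wedge\tau)$ bound finishes. The paper's two-piece argument makes the role of (\ref{eq:conditional}) more visible and is written in the form that Proposition~\ref{prop2} later reuses piece by piece; your one-shot inequality is more economical and easier to verify, at the minor cost of being less directly parallel to the structure of the subsequent proposition.
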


\begin{proof}
It holds that
\begin{equation}\label{eq:estprob3-01}
E^{X^x_T=y}\left[ T\wedge \tau \right] = E^{X^x_T=y}\left[ (T\wedge \tau) {\mathbb I}_{[0,s]}(\tau) \right] + E^{X^x_T=y}\left[ (T\wedge \tau) {\mathbb I}_{(s,\infty)}(\tau) \right] .
\end{equation}
By (\ref{eq:conditional}) we have
\begin{align*}
&\left| \int _{{\mathbb R}^d} f(y) p^X(0,x;T,y) E^{X^x_T=y}\left[ (T\wedge \tau) {\mathbb I}_{[0,s]}(\tau) \right] dy \right| \\
&= \left| \int _{{\mathbb R}^d} f(y) E\left[ (T\wedge \tau) {\mathbb I}_{[0,s]}(\tau)\ p^X\left( s,X^x _s;T,y\right) \right] dy \right| \\
&\leq E\left[ (T\wedge \tau) {\mathbb I}_{[0,s]}(\tau) \int _{{\mathbb R}^d} |f(y)| p^X\left( s,X^x _s;T,y\right) dy \right].
\end{align*}
Thus, by the inequality
\[
E\left[ (t\wedge \tau)^{p}{\mathbb I}_{[0,s]}(\tau) \right] \leq s^{p-1}E\left[ s\wedge \tau \right] 
\]
and H\"older's inequality we have
\begin{equation}\label{eq:estprob3-02}\begin{array}{l}
\displaystyle \left| \int _{{\mathbb R}^d} f(y) p^X(0,x;T,y) E^{X^x_T=y}\left[ (T\wedge \tau) {\mathbb I}_{[0,s]}(\tau) \right] dy \right|\\
\displaystyle \leq s^{1/p^*} E\left[ s\wedge \tau \right] ^{1/p} E\left[ \left( \int _{{\mathbb R}^d} |f(y)| p^X\left( s,X^x_s ;T,y\right) dy \right) ^{p^*}\right] ^{1/p^*}.
\end{array}\end{equation}
On the other hand, by (\ref{eq:conditional}) we have
\begin{align*}
&\left| \int _{{\mathbb R}^d} f(y) p^X(0,x;T,y) E^{X^x_T=y}\left[ (T\wedge \tau) {\mathbb I}_{(s,\infty)}(\tau) \right] dy \right| \\
&\leq T \int _{{\mathbb R}^d} |f(y)| p^X(0,x;T,y) E^{X^x_T=y}\left[ {\mathbb I}_{(s,\infty)}(\tau) \right] dy \\
&= T \int _{{\mathbb R}^d} |f(y)| E\left[ {\mathbb I}_{(s,\infty)}(\tau) p^X\left( s,X^x _s;T,y\right) \right] dy \\
&= T E\left[ {\mathbb I}_{(s,\infty)}(\tau) \int _{{\mathbb R}^d} |f(y)| p^X\left( s,X^x _s;T,y\right) dy \right] \\
&\leq T P(\tau >s)^{1/p} E\left[ \left( \int _{{\mathbb R}^d} |f(y)| p^X\left( s,X^x_s ;T,y\right) dy \right) ^{p^*}\right] ^{1/{p^*}}\\
&\leq T P(s\wedge \tau \geq s)^{1/p} E\left[ \left( \int _{{\mathbb R}^d} |f(y)| p^X\left( s,X^x_s ;T,y\right) dy \right) ^{p^*}\right] ^{1/{p^*}} .
\end{align*}
Hence, by applying Chebyshev's inequality we have
\begin{equation}\label{eq:estprob3-03}\begin{array}{l}
\displaystyle \left| \int _{{\mathbb R}^d} f(y) p^X(0,x;T,y) E^{X^x_T=y}\left[ (T\wedge \tau) {\mathbb I}_{(s,\infty)}(\tau) \right] dy \right| \\
\displaystyle \leq s^{-1/p}TE\left[ s\wedge \tau \right] ^{1/p} E\left[ \left( \int _{{\mathbb R}^d} |f(y)| p^X\left( s,X^x_s ;T,y\right) dy \right) ^{p^*}\right] ^{1/p^*} .
\end{array}\end{equation}
Thus, we obtain the assertion by (\ref{eq:estprob3-01}), (\ref{eq:estprob3-02}) and (\ref{eq:estprob3-03}).
\end{proof}

Now we consider the estimate of the solution by the expectation of $\tau$.

\begin{prop}\label{prop2}
Let $p\in (1,\infty)$. Then, it holds that
\begin{align*}
&|u(T,x) - u(T,z)| \\
&\leq 2(s+T) (\| c\|_\infty + s^{-1}) s^{-1/p} e^{2\| c\|_\infty T}  E \left[ s \wedge \tau \right] ^{1/p} \\
&\quad \hspace{2cm} \times \max_{\eta = x,z} E\left[ \left( \int _{{\mathbb R}^d} |f(y)| p^X\left( s,X^\eta_s;T,y\right) dy \right)^{p^*}\right] ^{1/p^*}
\end{align*}
for $f\in C_b({\mathbb R}^d)$, $s \in (0,T)$, and $x,z \in {\mathbb R}^d$.
\end{prop}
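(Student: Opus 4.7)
My plan is to combine the Feynman--Kac representation (\ref{eq:uX}) with a pointwise coupling estimate on the integrand $f(X_T^x)e^{C_T(X^x)} - f(Z_T^z)e^{C_T(Z^z)}$, where $C_t(W) := \int_0^t c(T-r, W_r)\,dr$, and then bound the resulting expectations using Lemma \ref{lem:estprob3} and the conditioning formula (\ref{eq:conditional}). First I would note that $\hat B_t := \tilde B_{t\wedge\tau} + (B_t - B_{t\wedge\tau})$, the driver of $Z^z$, is an $({\mathscr F}_t)$-Brownian motion: it is a continuous local martingale whose quadratic variation is $tI$ (by the orthogonality of the matrices $H_u$ for $u < \tau$), so Lévy's characterization applies. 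Consequently $Z^z \stackrel{d}{=} X^z$ and $u(T,z) = E[f(Z_T^z)e^{C_T(Z^z)}]$, giving
\[
u(T,x) - u(T,z) = E\bigl[f(X_T^x)e^{C_T(X^x)} - f(Z_T^z)e^{C_T(Z^z)}\bigr].
\]

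Next I would derive a pointwise estimate on this integrand. On $\{\tau < T\}$ the coupling has taken effect, so $X^x = Z^z$ on $[\tau,T]$; in particular $f(X_T^x) = f(Z_T^z)$ and the post-$\tau$ contributions to $C_T$ cancel. Combining $|e^a - e^b| \leq e^{|a|\vee|b|}|a-b|$ with $|C_\tau(X^x) - C_\tau(Z^z)| \leq 2\|c\|_\infty\tau$ yields $|f(X_T^x)e^{C_T(X^x)} - f(Z_T^z)e^{C_T(Z^z)}| \leq 2\|c\|_\infty e^{\|c\|_\infty T}|f(X_T^x)|\tau$ there, while on $\{\tau = T\}\subseteq\{\tau > s\}$ only the trivial bound $e^{\|c\|_\infty T}(|f(X_T^x)|+|f(Z_T^z)|)$ is available. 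Enlarging one factor of $e^{\|c\|_\infty T}$ to $e^{2\|c\|_\infty T}$ for notational uniformity, I obtain the global estimate
\[
\bigl|f(X_T^x)e^{C_T(X^x)} - f(Z_T^z)e^{C_T(Z^z)}\bigr| \leq 2\|c\|_\infty e^{2\|c\|_\infty T}|f(X_T^x)|\tau + e^{\|c\|_\infty T}\bigl(|f(X_T^x)|+|f(Z_T^z)|\bigr)\mathbb{I}_{\tau > s}.
\]

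I would then take expectations. For the first term Lemma \ref{lem:estprob3} applied to $|f|$ (with $T\wedge\tau = \tau$) yields
\[
E[|f(X_T^x)|\tau] \leq (s+T)s^{-1/p}E[s\wedge\tau]^{1/p}\|\psi_x\|_{L^{p^*}(p^X(0,x;s,\cdot))},
\]
where $\psi_\eta(\xi) := \int|f(y)|p^X(s,\xi;T,dy)$. For the indicator terms I need to show that both $E[|f(X_T^x)|\mathbb{I}_{\tau > s}]$ and $E[|f(Z_T^z)|\mathbb{I}_{\tau > s}]$ are bounded by $s^{-1/p}E[s\wedge\tau]^{1/p}\|\psi_\eta\|_{L^{p^*}(p^X(0,\eta;s,\cdot))}$ for $\eta = x$ and $\eta = z$, respectively. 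The $X^x$-version is immediate from (\ref{eq:conditional}), Hölder, and Chebyshev's inequality ($P(\tau > s) \leq s^{-1}E[s\wedge\tau]$), exactly as in the proof of Lemma \ref{lem:estprob3}. For the $Z^z$-version I would invoke the Markov property $E[|f(Z_T^z)|\mid{\mathscr F}_s] = \psi_z(Z_s^z)$ --- which is precisely the place where the identification of $\hat B$ as an $({\mathscr F}_t)$-Brownian motion is essential, since it lets $Z^z$ be a strong solution of the SDE driven by $\hat B$ and therefore Markov with transition $p^X$ with respect to the enlarged filtration --- and then apply the same Hölder--Chebyshev estimate, using $Z_s^z \stackrel{d}{=} X_s^z$ to rewrite the ambient measure as $p^X(0,z;s,\cdot)$. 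Summing the three contributions and using $(s+T)/s\geq 1$ to pass from the prefactor $2\|c\|_\infty(s+T) + 2$ to $2(s+T)(\|c\|_\infty + s^{-1})$ gives the claimed inequality with the factor $\max_{\eta=x,z}\|\psi_\eta\|_{L^{p^*}(p^X(0,\eta;s,\cdot))}$.

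The principal technical obstacle is the Markov property of $Z^z$ with respect to the full filtration $({\mathscr F}_t)$: a priori $\mathscr{F}_s$ contains information about the auxiliary process $X^x$ that could bias the conditional law of $Z_T^z$, and the only way to rule this out is to verify via Lévy's characterization that the transformed noise $\hat B$ remains an $({\mathscr F}_t)$-Brownian motion, so that the SDE strong-solution structure of $Z^z$ yields the Markov property with the correct transition density $p^X$.
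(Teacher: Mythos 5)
Your proof is correct and reaches the same constant, but it is organized differently from the paper's argument. The paper fixes an intermediate time $t_0\in(s,T)$, writes everything through the fundamental-solution representation $\int f(y)\,E[p^X(t_0,X^x_{t_0};T,y)\exp(\int_0^{t_0}c\,du)]\,dy$, manipulates the conditional expectations $E^{X^x_T=y}[\,\cdot\,]$ via (\ref{eq:conditional}), and only recovers $u(T,\cdot)$ in the limit $t_0\uparrow T$; you instead bound the raw Feynman--Kac integrand $f(X^x_T)e^{C_T(X^x)}-f(Z^z_T)e^{C_T(Z^z)}$ pointwise on $\{\tau<T\}$ (using the coupling and the mean-value bound) and on $\{\tau=T\}\subseteq\{\tau>s\}$ (trivially), then take expectations once, applying Lemma \ref{lem:estprob3} only for the term that genuinely needs it and a bare H\"older--Chebyshev for the indicator terms. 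Both routes require the Markov property of $Z^z$ with respect to the full filtration $({\mathscr F}_t)$ in order to handle the $Z^z$-contribution on $\{\tau>s\}$; the paper leaves this implicit in its ``similarly we have (\ref{eq:2inital04-2})'' step, whereas you correctly make it explicit by verifying via L\'evy's characterization that $\hat B_t=\tilde B_{t\wedge\tau}+(B_t-B_{t\wedge\tau})$ is an $({\mathscr F}_t)$-Brownian motion, so that $Z^z$ is a strong solution driven by an $({\mathscr F}_t)$-Brownian motion and hence $({\mathscr F}_t)$-Markov with transition $p^X$. Your direct route avoids the $t_0\uparrow T$ passage entirely, and the indicator-term estimate you use is marginally sharper than the paper's (which re-routes through Lemma \ref{lem:estprob3} and so picks up an extra $(s+T)/s$); both are absorbed into the stated bound $2(s+T)(\|c\|_\infty+s^{-1})$.
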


\begin{proof}
Let $s \in (0,T)$, $x,y,z \in {\mathbb R}^d$ such that $x\neq z$, and $t_0 \in (s,T)$.
Recall that $X^z$ and $Z^z$ have the same law.
By (\ref{eq:conditional}) we have
\begin{align*}
&\left| E\left[ p^X({t_0},X^x_{t_0} ;T,y) \exp \left( \int _0^{t_0} c(T-u,X_u^x)du\right) ;\ \tau \leq s \right] \right.\\
& \quad \hspace{4cm} \left.- E\left[ p^X({t_0},Z^z_{t_0} ;T,y) \exp \left( \int _0^{t_0} c(T-u,Z_u^z)du\right) ;\ \tau \leq s \right] \right| \\
& \leq E\left[ \exp \left( \int _0^{t_0} c(T-u,Z_u^z)du\right) \left| p^X({t_0},X^x_{t_0} ;T,y) - p^X({t_0},Z^z_{t_0} ;T,y) \right|  ;\ \tau \leq s \right] \\
& \quad + E\left[ \left| \exp \left( \int _0^{\tau \wedge {t_0}} c(T-u,X_u^x)du\right) - \exp \left( \int _0^{\tau \wedge {t_0}} c(T-u,Z_u^z)du\right) \right| \right.\\
&\quad \hspace{5cm} \left. \times \exp \left( \int _{\tau \wedge {t_0}}^{t_0} c(T-u,Z_u^z)du\right) p^X({t_0},X^x_{t_0} ;T,y) ;\ \tau \leq s \right] \\
& \quad +E\left[ \exp \left( \int _0^{\tau \wedge {t_0}} c(T-u,X_u^x)du\right)\right.\\
&\quad \left. \phantom{\int _{\tau \wedge {t_0}}^{t_0}}  \times \left|\exp \left( \int _{\tau \wedge {t_0}}^{t_0} c(T-u,X_u^x)du\right) - \exp \left( \int _{\tau \wedge {t_0}}^{t_0} c(T-u,Z_u^z)du\right) \right| p^X({t_0},X^x_{t_0} ;T,y)  ;\ \tau \leq s \right] .
\end{align*}
Noting that
\[ 
X^x_u=Z^z_u\quad \mbox{for}\ u\geq \tau,
\]
we obtain
\begin{equation}\label{eq:2inital01}\begin{array}{l}
\displaystyle \left| p^X(0,x;T,y) E^{X^x_T =y}\left[ \exp \left( \int _0^{t_0} c(T-u,X_u^x)du\right) ;\ \tau \leq s \right] \right.\\
\displaystyle \quad \hspace{2cm} \left.- p^X(0,z;T,y) E^{X^x_T =y}\left[ \exp \left( \int _0^{t_0} c(T-u,X_u^z)du\right) ;\ \tau \leq s \right] \right| \\
\displaystyle \leq E\left[ \left| \exp \left( \int _0^{\tau \wedge {t_0}} c(T-u,X_u^x)du\right) - \exp \left( \int _0^{\tau \wedge {t_0}} c(T-u,Z_u^z)du\right) \right| \right.\\
\displaystyle \quad \hspace{3cm} \left. \times \exp \left( \int _{\tau \wedge {t_0}}^{t_0} c(T-u,Z_u^z)du\right) p^X({t_0},X^x_{t_0} ;T,y) ;\ \tau \leq s \right] .
\end{array}\end{equation}
By the triangle inequality, the mean-value theorem and (\ref{eq:conditional}), we obtain
\begin{align*}
& E\left[ \left| \exp \left( \int _0^{\tau \wedge {t_0}} c(T-u,X_u^x)du\right) - \exp \left( \int _0^{\tau \wedge {t_0}} c(T-u,Z_u^z)du\right) \right| \right.\\
&\quad \hspace{3cm} \left. \times \exp \left( \int _{\tau \wedge {t_0}}^{t_0} c(T-u,Z_u^z)du\right) p^X({t_0},X^x_{t_0} ;T,y) ;\ \tau \leq s \right] \\
& \leq e^{\| c\|_\infty T}E\left[ \left| \exp \left( \int _0^{\tau \wedge {t_0}} c(T-u,X_u^x)du\right) - 1 \right| p^X({t_0},X^x_{t_0} ;T,y) ;\ \tau \leq s \right] \\
&\quad + e^{\| c\|_\infty T} E\left[ \left| \exp \left( \int _0^{\tau \wedge {t_0}} c(T-u,Z_u^z)du\right) - 1 \right| p^X({t_0},X^x_{t_0} ;T,y) ;\ \tau \leq s \right] \\
& \leq 2\| c\|_\infty e^{2\| c\|_\infty T}E\left[ (T\wedge \tau) p^X({t_0},X^x_{t_0} ;T,y) ;\ \tau \leq s \right] \\
& \leq 2\| c\|_\infty e^{2\| c\|_\infty T}E\left[ (s\wedge \tau) p^X({t_0},X^x_{t_0} ;T,y) \right] \\
& \leq 2\| c\|_\infty e^{2\| c\|_\infty T}p^X(0,x;T,y) E^{X^x_t =y}\left[ s\wedge \tau \right] .
\end{align*}
Hence, by Lemma \ref{lem:estprob3} and (\ref{eq:2inital01}) we have
\begin{equation}\label{eq:2inital03} \begin{array}{l}
\displaystyle \left| \int _{{\mathbb R}^d} f(y) p^X(0,x;T,y) E^{X^x_T =y}\left[ \exp \left( \int _0^{t_0} c(T-u,X_u^x)du\right) ;\ \tau \leq s \right] dy\right.\\
\displaystyle \quad \hspace{1cm} \left.- \int _{{\mathbb R}^d} f(y) p^X(0,z;T,y) E^{X^x_T =y}\left[ \exp \left( \int _0^{t_0} c(T-u,X_u^z)du\right) ;\ \tau \leq s \right] dy \right| \\[3mm]
\displaystyle \leq 2\| c\|_\infty e^{2\| c\|_\infty T} (s+T)s^{-1/p}E[ s\wedge \tau ] ^{1/p} \\
\displaystyle \quad \hspace{4cm} \times E \left[ \left( \int _{{\mathbb R}^d} |f(y)| p^X\left( s,X^x_s ;T,y\right) dy \right) ^{p^*} \right]^{1/p^*}
\end{array}\end{equation} 
for $f\in C_b({\mathbb R}^d)$, $s,t_0\in (0,T)$ such that $s<t_0$, and $x,z \in {\mathbb R}^d$.

On the other hand, Chebyshev's inequality implies
\begin{align*}
E^{X_T^x=y}\left[ \exp \left( \int _0^{t_0} c(T-u,X_u^x)du\right) ;\ \tau \geq s \right] &\leq e^{\| c\|_\infty T} P^{X_T^x=y} \left( s\wedge \tau \geq s\right) \\
&\leq s^{-1}e^{\| c\|_\infty T} E^{X_T^x=y} \left[ s \wedge \tau\right].
\end{align*}
Hence, by Lemma \ref{lem:estprob3} we obtain
\begin{equation}\label{eq:2inital04}\begin{array}{l}
\displaystyle \left| \int _{{\mathbb R}^d} f(y) p^X(0,x;T,y) E^{X_T^x=y}\left[ \exp \left( \int _0^{t_0} c(T-u,X_u^x)du\right) ;\ \tau \geq s \right] dy \right| \\[3mm]
\displaystyle \leq (s+T) s^{-(1+1/p)} e^{\| c\|_\infty T} E[ s\wedge \tau ] ^{1/p}E \left[ \left( \int _{{\mathbb R}^d} |f(y)| p^X\left( s,X^x_s ;T,y\right) dy \right) ^{p^*} \right]^{1/p^*}
\end{array}\end{equation}
for $f\in C_b({\mathbb R}^d)$, $s,t_0\in (0,T)$ such that $s<t_0$, and $x,z \in {\mathbb R}^d$.
Similarly we have
\begin{equation}\label{eq:2inital04-2}\begin{array}{l}
\displaystyle \left| \int _{{\mathbb R}^d} f(y) p^X(0,z;T,y) E^{Z_T^z=y}\left[ \exp \left( \int _0^{t_0} c(T-u,Z_u^z)du\right) ;\ \tau \geq s \right] dy \right| \\[3mm]
\displaystyle \leq (s+T) s^{-(1+1/p)} e^{\| c\|_\infty T} E[ s\wedge \tau ] ^{1/p} E \left[ \left( \int _{{\mathbb R}^d} |f(y)| p^X\left( s,Z^z_s ;T,y\right) dy \right) ^{p^*} \right]^{1/p^*}
\end{array}\end{equation}
for $f\in C_b({\mathbb R}^d)$, $s,t_0\in (0,T)$ such that $s<t_0$, and $x,z \in {\mathbb R}^d$.

Thus, from (\ref{eq:2inital03}), (\ref{eq:2inital04}) and (\ref{eq:2inital04-2}), we obtain
\begin{align*}
&\left| \int _{{\mathbb R}^d} f(y) E\left[ p^X({t_0},X^x_{t_0} ;T,y) \exp \left( \int _0^{t_0} c(T-u,X_u^x)du\right)\right] dy \right.\\
&\quad \left. \hspace{2cm} - \int _{{\mathbb R}^d} f(y) E\left[ p^X({t_0},Z^z_{t_0} ;T,y) \exp \left( \int _0^{t_0} c(T-u,Z_u^z)du\right)\right] dy \right| \\
&\leq 2(s+T) (\| c\|_\infty + s^{-1}) s^{-1/p} e^{2\| c\|_\infty T} E[ s\wedge \tau ] ^{1/p} \\
&\quad \hspace{3cm} \times \max_{\eta = x,z} E \left[ \left( \int _{{\mathbb R}^d} |f(y)| p^X\left( s,X^x_s ;T,y\right) dy \right) ^{p^*} \right]^{1/p^*}
\end{align*}
for $f\in C_b({\mathbb R}^d)$, $s,t_0 \in (0,T)$ such that $s<t_0$, and $x,z \in {\mathbb R}^d$.
Therefore, since (\ref{eq:conditional}) and (\ref{eq:fundamental}) imply that for $\eta =x,z$
\begin{align*}
&\lim _{t_0 \uparrow T} \int _{{\mathbb R}^d} f(y) E\left[ p^X(t_0,X_{t_0}^\eta;T,y) \exp \left( \int_0^{t_0} c(T-u,X_u^\eta) du\right) \right] dy \\
&= \lim _{t_0 \uparrow T} \int _{{\mathbb R}^d} f(y) p^X(0,\eta;T,y) E^{X_T^\eta=y}\left[ \exp \left( \int_0^{t_0} c(T-u,X_u^\eta) du\right) \right] dy\\
&= \lim _{t_0 \uparrow T} E\left[ f(X^{\eta}(T)) \exp \left( \int_0^{t_0} c(T-u,X_u^\eta) du\right) \right] dy\\
&= E\left[ f(X^{\eta}(T)) \exp \left( \int_0^T c(T-u,X_u^\eta) du\right) \right] dy\\
&= \int _{{\mathbb R}^d} f(y) p^X(0,\eta;T,y) E^{X_T^x=y}\left[ \exp \left( \int_0^T c(T-u,X_u^\eta) du\right) \right] dy\\
&= \int _{{\mathbb R}^d} f(y) p(0,\eta;T,y) dy\\
&= u(T,\eta),
\end{align*}
we obtain the assertion.
\end{proof}

\section{The case of general $a$}\label{sec:general}

Let $p\in (1,\infty)$.
Let $a(t,x)=(a_{ij}(t,x))$ be a symmetric $d\times d$-matrix-valued bounded measurable function on $[0,\infty ) \times {\mathbb R}^d$ satisfying (\ref{ass:a1}) and that $a(t,\cdot )$ is continuous uniformly in $t$.
Choose $d\times d$-matrix-valued function $\sigma (t,x)$ such that $\sigma_{ij}(t,\cdot)$ is continuous uniformly in $t$, $a(t,x)= \sigma (t,x) \sigma (t,x)^T$ and (\ref{eq:sigma}) holds.
Let $\sigma ^{(n)}$ be a sequence whose components are smooth and $\sigma ^{(n)}(t,x)$ converges to that of $\sigma (t,x)$ for each $(t,x) \in [0,\infty) \times {\mathbb R}^d$.
Denote $\sigma ^{(n)}(t,x) \sigma ^{(n)}(t,x) ^T$ by $a^{(n)}(t,x)$.
Let $b^{(n)}(t,x)$ be a sequence of ${\mathbb R}^d$-valued smooth functions such that $\| b^{(n)}\| _\infty \leq \| b\| _\infty$ and $b^{(n)}(t,x)$ converges to $b(t,x)$ almost every $(t,x)$ with respect to the Lebesgue measure $dt\times dx$.
Consider the following parabolic partial differential equation
\begin{equation}\label{PDEn}
\left\{ \begin{array}{rl}
\displaystyle \frac{\partial}{\partial t} u^{(n)}(t,x) = & \displaystyle \frac 12 \sum _{i,j=1}^d a_{ij}^{(n)}(t,x)\frac{\partial ^2}{\partial x_i \partial x_j} u^{(n)}(t,x) +  \sum _{i=1}^d b_i^{(n)} (t,x) \frac{\partial }{\partial x_i} u^{(n)}(t,x) \\[3mm]
& \displaystyle + c(t,x) u^{(n)}(t,x) \\[3mm]
\displaystyle u^{(n)}(0,x) =&f(x).
\end{array}\right.
\end{equation}
Let $T>0$.
Similarly to Section \ref{sec:representation} we consider the following stochastic differential equation associated to (\ref{PDEn}):
\begin{equation}\label{SDEn}
\left\{ \begin{array}{rl}
\displaystyle dX_t^{(n),x} & \displaystyle = \sigma ^{(n)}(T-t,X_t^{(n),x})dB_t + b^{(n)}(T-t,X_t^{(n),x})dt,\quad t\in [0,T],\\
\displaystyle X_0^{(n),x}& \displaystyle =x .
\end{array}\right.
\end{equation}
Denote the transition probability density function of $X^{(n)}$ by $p^{X^{(n)}}(s,x;t,y)$.
Then, Theorem 11.3.4 of \cite{StVa} implies that
\begin{equation}\label{eq:conv1}
\lim _{n\rightarrow \infty} \int _{{\mathbb R}^d} |f(y)| p^{X^{(n)}}\left( s,\xi _n ;t,y\right) dy = \int _{{\mathbb R}^d} |f(y)| p^X\left( s,\xi ;t,dy\right) 
\end{equation}
for any $f\in C_b({\mathbb R}^d)$, $s,t\in (0,T]$ such that $s<t$, and $\{ \xi _n\} \subset {\mathbb R}^d$ such that $\lim _{n\rightarrow \infty}\xi _n = \xi$.
Besides, Theorem 11.3.4 of \cite{StVa} and Theorem 2.7 of Chapter I in \cite{IW} imply that for each $x\in {\mathbb R}^d$, there exist another probability space $(\tilde \Omega , \tilde {\mathscr F}, {\tilde P})$, a sequence of stochastic processes $\tilde X^{(n),x}$ and a stochastic process $\tilde X^x$ such that the law of $\tilde X^{(n),x}$ equals to that of $X^{(n),x}$, the law of $\tilde X^{x}$ equals to that of $X^x$, and $X^{(n),x}$ converges to $X^x$ on $C([0,T];{\mathbb R}^d)$ almost surely.
As the consequence of this fact, we obtain by (\ref{eq:uX})
\begin{equation}\label{eq:conv2}
\lim _{n\rightarrow \infty}u^{(n)}(T,x) = u(T,x)
\end{equation}
for each $x\in {\mathbb R}^d$, and also we obtain by (\ref{eq:conv1})
\begin{equation}\label{eq:conv3}\begin{array}{l}
\displaystyle \lim _{n\rightarrow \infty} E\left[ \left( \int _{{\mathbb R}^d} |f(y)| p^{X^{(n)}}\left( s,X^{(n),x}_s ;t,y\right) dy \right) ^{p^*}\right]\\
\displaystyle = E\left[ \left( \int _{{\mathbb R}^d} |f(y)| p^X\left( s,X^{x}_s ;t,dy\right) \right) ^{p^*}\right]
\end{array}\end{equation}
for any $f\in C_b({\mathbb R}^d)$, $s,t\in (0,T]$ such that $s<t$, and $x \in {\mathbb R}^d$.

On the other hand, Lemma \ref{lem:estprob1} and Proposition \ref{prop2} implies that for $R>0$ and sufficiently small $\varepsilon >0$, there exists a positive constant $C$ depending on $d$, $\Lambda$, $\varepsilon$, $R$, $\rho _R$, $\| b\| _\infty$ and $\| c\| _\infty$
\begin{align*}
&|u^{(n)}(T,x) - u^{(n)}(T,z)| \\
&\leq Cs^{-(1+1/p)}T e^{CT} |x-z|^{(1-\varepsilon )/p} E\left[ \left( \int _{{\mathbb R}^d} |f(y)| p^{X^{(n)}}\left( s,X^{(n),x}_s;T,y\right) dy \right)^{p^*}\right] ^{1/p^*}
\end{align*}
for $f\in C_b({\mathbb R}^d)$, $s\in (0,T)$, and $x,z \in B(0;R/2)$.
Hence, by this inequality, (\ref{eq:conv2}) and (\ref{eq:conv3}), \ref{thm:main1} of Theorem \ref{thm:main} is obtained for $p\in (1,\infty)$.
The case that $p=1$ follows by taking limit as $p\downarrow 1$.

We have \ref{thm:main2} and \ref{thm:main3} of Theorem \ref{thm:main} by applying Lemmas \ref{lem:estprob1-2} and \ref{lem:estprob1-3} instead of Lemma \ref{lem:estprob1}, respectively.

\section*{Appendix}

\renewcommand{\thethm}{A.\arabic{thm}}

\begin{prop}\label{prop:appendix1}
Let $({\mathscr F}_t)$ be a filtration and $(M_t)$ be a continuous square integrable martingale such that $M_0=0$ almost surely.
If there exists non-random processes $(\alpha _t)$ and $(\beta _t)$ such that $0 \leq \alpha _t \leq \langle M\rangle _t \leq \beta _t$ for $t\in [0,\infty )$ almost surely, then for $x\geq 0$
\begin{eqnarray*}
\sqrt{\frac 2\pi} \int _{x{\alpha _t}^{-1/2}}^\infty e^{-\xi ^2/2} d\xi \leq P\left( \sup _{s\in [0,t]}M_s \geq x\right) \leq \sqrt{\frac 2\pi} \int _{x{\beta _t}^{-1/2}}^\infty e^{-\xi ^2/2} d\xi ,\\
\sqrt{\frac 2\pi} \int _0 ^{x{\beta _t}^{-1/2}} e^{-\xi ^2/2} d\xi \leq P\left( \sup _{s\in [0,t]}M_s \leq x\right) \leq \sqrt{\frac 2\pi} \int _0 ^{x{\alpha _t}^{-1/2}} e^{-\xi ^2/2} d\xi .
\end{eqnarray*}
In particular, if $c_{1}t \leq \langle M\rangle _t \leq c_{2}t$ for $t\in [0,\infty )$ almost surely with some positive constants $c_1$ and $c_2$, then for $t>0$ and $x>0$
\begin{align*}
P\left( \sup _{s\in [0,t]}M_s \geq x\right) &\leq \sqrt{\frac {2c_2t}{\pi x^2}} \exp\left(-\frac{x^2}{2c_2 t}\right) ,\\
P\left( \sup _{s\in [0,t]}M_s \leq x\right) &\leq \sqrt{\frac 2{c_1 \pi t}} x.
\end{align*}
\end{prop}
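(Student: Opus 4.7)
The plan is to reduce to the distribution of the running maximum of a standard Brownian motion via the Dambis--Dubins--Schwarz time change, and then use the reflection principle.

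First I would invoke the Dambis--Dubins--Schwarz theorem to write $M_t = B_{\langle M \rangle_t}$, where $(B_u)$ is a one-dimensional Brownian motion (if $\langle M \rangle_\infty < \infty$ on some event, extend by an independent Brownian motion on an enlarged probability space so that $B$ is defined on all of $[0,\infty)$). Since $u \mapsto B_u$ has the same running maximum as the process $s \mapsto B_{\langle M \rangle_s}$ over the image $\langle M\rangle([0,t]) = [0, \langle M \rangle_t]$, we get
\begin{equation*}
\sup_{s \in [0,t]} M_s \;=\; \sup_{u \in [0, \langle M \rangle_t]} B_u.
\end{equation*}
The hypothesis $\alpha_t \leq \langle M \rangle_t \leq \beta_t$ together with monotonicity of the running maximum then gives
\begin{equation*}
\sup_{u \in [0, \alpha_t]} B_u \;\leq\; \sup_{s \in [0,t]} M_s \;\leq\; \sup_{u \in [0, \beta_t]} B_u \quad \text{a.s.}
\end{equation*}
Here it is crucial that $\alpha_t$ and $\beta_t$ are \emph{non-random}: otherwise one could not directly read off the law of the bracketing quantities from the distribution of $B$.

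Next, for a fixed deterministic time $\tau > 0$ the reflection principle yields
\begin{equation*}
P\Bigl( \sup_{u \in [0,\tau]} B_u \geq x \Bigr) \;=\; 2P(B_\tau \geq x) \;=\; \sqrt{\tfrac{2}{\pi}} \int_{x/\sqrt{\tau}}^{\infty} e^{-\xi^2/2}\,d\xi, \qquad x \geq 0.
\end{equation*}
Applying this with $\tau = \alpha_t$ and $\tau = \beta_t$ respectively, and using the inclusion displayed above, gives both of the first pair of inequalities of the proposition. The second pair on $P(\sup M_s \leq x)$ is just the complement.

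Finally, for the particular case $c_1 t \leq \langle M \rangle_t \leq c_2 t$, I would use the standard Mills-ratio type bound $\int_a^\infty e^{-\xi^2/2}d\xi \leq a^{-1} e^{-a^2/2}$ (valid for $a > 0$) with $a = x/\sqrt{c_2 t}$ to get the Gaussian tail estimate, and for the lower-tail bound $P(\sup M_s \leq x) \leq \sqrt{2/\pi} \int_0^{x/\sqrt{c_1 t}} e^{-\xi^2/2}d\xi$ it is enough to bound the integrand by $1$. The only genuine substance is the DDS time change and reflection principle; the rest is elementary Gaussian estimates, so I do not anticipate a serious obstacle beyond verifying that the time-change argument carries through when $\langle M\rangle_\infty$ need not be infinite, which is handled by the standard enlargement.
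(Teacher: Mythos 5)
Your proof is correct and uses essentially the same route as the paper: the Dambis--Dubins--Schwarz time change to a Brownian motion (the paper cites Theorem 7.2 of Chapter II in Ikeda--Watanabe), followed by the reflection-principle formula for the running maximum of Brownian motion, with the bracketing by the deterministic bounds $\alpha_t, \beta_t$. The explicit Mills-ratio and trivial integrand bounds you give for the ``in particular'' part are also the natural way to read them off, and the paper leaves those to the reader.
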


\begin{proof}
By Theorem 7.2 of Chapter II in \cite{IW}, there exists a Brownian motion $(B_t)$ satisfying that $M_t= B_{\langle M\rangle _t}$ for $t\in [0,\infty )$ almost surely (if necessary, extend the probability space).
Hence, 
\[
P\left( \sup _{s\in [0,t]}M_s \geq x\right) = P\left( \sup _{s\in [0,\langle M\rangle _t ]}B_s \geq x\right) .
\]
By the assumption we obtain
\[
P\left( \sup _{s\in [0,\alpha _t]}B_s \geq x\right) \leq P\left( \sup _{s\in [0,t]}M_s \geq x\right)  \leq P\left( \sup _{s\in [0,\beta _t ]}B_s \geq x\right) .
\]
On the other hand, it is known that for $t\in (0,\infty)$ and $x\geq 0$
\[
P\left( \sup _{s\in [0,t]}B_s \geq x\right) = \sqrt{\frac 2\pi} \int _{xt^{-1/2}}^\infty e^{-\xi ^2/2} d\xi
\]
(see e.g. Section 2.6.A of Chapter 2 in \cite{KaSh}).
Thus, we have the first assertion.
The second one is obtained similarly.
\end{proof}

\section*{Acknowledgment}
The author is grateful to Professor Shigeo Kusuoka for helpful comments.
The author also thank to the anonymous referee for helpful comments.
This work was supported by JSPS KAKENHI Grant number 25800054.

\bibliographystyle{plain}
\bibliography{Sei010.bib}

\def\cprime{$'$} \def\cprime{$'$}
\begin{thebibliography}{10}

\bibitem{Ar}
D.~G. Aronson.
\newblock Bounds for the fundamental solution of a parabolic equation.
\newblock {\em Bull. Amer. Math. Soc.}, 73:890--896, 1967.

\bibitem{CaKe}
Luis~A. Caffarelli and Carlos~E. Kenig.
\newblock Gradient estimates for variable coefficient parabolic equations and
  singular perturbation problems.
\newblock {\em Amer. J. Math.}, 120(2):391--439, 1998.

\bibitem{DG}
Ennio De~Giorgi.
\newblock Sulla differenziabilit\`a e l'analiticit\`a delle estremali degli
  integrali multipli regolari.
\newblock {\em Mem. Accad. Sci. Torino. Cl. Sci. Fis. Mat. Nat. (3)}, 3:25--43,
  1957.

\bibitem{FaKe}
Eugene~B. Fabes and Carlos~E. Kenig.
\newblock Examples of singular parabolic measures and singular transition
  probability densities.
\newblock {\em Duke Math. J.}, 48(4):845--856, 1981.

\bibitem{Fr}
Avner Friedman.
\newblock {\em Partial differential equations of parabolic type}.
\newblock Prentice-Hall Inc., Englewood Cliffs, N.J., 1964.

\bibitem{IW}
Nobuyuki Ikeda and Shinzo Watanabe.
\newblock {\em Stochastic differential equations and diffusion processes},
  volume~24 of {\em North-Holland Mathematical Library}.
\newblock North-Holland Publishing Co., Amsterdam, second edition, 1989.

\bibitem{KaSh}
Ioannis Karatzas and Steven~E. Shreve.
\newblock {\em Brownian motion and stochastic calculus}, volume 113 of {\em
  Graduate Texts in Mathematics}.
\newblock Springer-Verlag, New York, second edition, 1991.

\bibitem{Kr}
N.~V. Krylov.
\newblock {\em Lectures on elliptic and parabolic equations in {H}\"older
  spaces}, volume~12 of {\em Graduate Studies in Mathematics}.
\newblock American Mathematical Society, Providence, RI, 1996.

\bibitem{KuSt}
S.~Kusuoka and D.~Stroock.
\newblock Applications of the {M}alliavin calculus. {II}.
\newblock {\em J. Fac. Sci. Univ. Tokyo Sect. IA Math.}, 32(1):1--76, 1985.

\bibitem{Sei010-1}
Seiichiro Kusuoka.
\newblock H\"older continuity and bounds for fundamental solutions to
  nondivergence form parabolic equations.
\newblock {\em Anal. PDE}, 8(1):1--32, 2015.

\bibitem{LSU}
O.~A. Lady{\v{z}}enskaja, V.~A. Solonnikov, and N.~N. Ural{\cprime}ceva.
\newblock {\em Linear and quasilinear equations of parabolic type}.
\newblock Translated from the Russian by S. Smith. Translations of Mathematical
  Monographs, Vol. 23. American Mathematical Society, Providence, R.I., 1968.

\bibitem{LM}
P.~D. Lax and A.~N. Milgram.
\newblock Parabolic equations.
\newblock In {\em Contributions to the theory of partial differential
  equations}, Annals of Mathematics Studies, no. 33, pages 167--190. Princeton
  University Press, Princeton, N. J., 1954.

\bibitem{LiRo}
Torgny Lindvall and L.~C.~G. Rogers.
\newblock Coupling of multidimensional diffusions by reflection.
\newblock {\em Ann. Probab.}, 14(3):860--872, 1986.

\bibitem{Lu}
Alessandra Lunardi.
\newblock {\em Analytic semigroups and optimal regularity in parabolic
  problems}.
\newblock Modern Birkh\"auser Classics. Birkh\"auser/Springer Basel AG, Basel,
  1995.
\newblock [2013 reprint of the 1995 original] [MR1329547].

\bibitem{Mo}
J{\"u}rgen Moser.
\newblock A {H}arnack inequality for parabolic differential equations.
\newblock {\em Comm. Pure Appl. Math.}, 17:101--134, 1964.

\bibitem{Na}
J.~Nash.
\newblock Continuity of solutions of parabolic and elliptic equations.
\newblock {\em Amer. J. Math.}, 80:931--954, 1958.

\bibitem{PrEi}
F.~O. Porper and S.~D. {\`E}{\u\i}del{\cprime}man.
\newblock Two-sided estimates of the fundamental solutions of second-order
  parabolic equations and some applications of them.
\newblock {\em Russian Math. Surveys}, 39(3):119--178, 1984.

\bibitem{PoPr}
A.~Porretta and E.~Priola.
\newblock Global {L}ipschitz regularizing effects for linear and nonlinear
  parabolic equations.
\newblock {\em J. Math. Pures Appl. (9)}, 100(5):633--686, 2013.

\bibitem{PrWa}
Enrico Priola and Feng-Yu Wang.
\newblock Gradient estimates for diffusion semigroups with singular
  coefficients.
\newblock {\em J. Funct. Anal.}, 236(1):244--264, 2006.

\bibitem{ReYo}
Daniel Revuz and Marc Yor.
\newblock {\em Continuous martingales and {B}rownian motion}, volume 293 of
  {\em Grundlehren der Mathematischen Wissenschaften [Fundamental Principles of
  Mathematical Sciences]}.
\newblock Springer-Verlag, Berlin, third edition, 1999.

\bibitem{Sa}
Mikhail~V. Safonov.
\newblock Nonuniqueness for second-order elliptic equations with measurable
  coefficients.
\newblock {\em SIAM J. Math. Anal.}, 30(4):879--895 (electronic), 1999.

\bibitem{Ste}
H.~Bruce Stewart.
\newblock Generation of analytic semigroups by strongly elliptic operators.
\newblock {\em Trans. Amer. Math. Soc.}, 199:141--162, 1974.

\bibitem{St}
Daniel~W. Stroock.
\newblock Diffusion semigroups corresponding to uniformly elliptic divergence
  form operators.
\newblock In {\em S\'eminaire de {P}robabilit\'es, {XXII}}, volume 1321 of {\em
  Lecture Notes in Math.}, pages 316--347. Springer, Berlin, 1988.

\bibitem{StVa1}
Daniel~W. Stroock and S.~R.~S. Varadhan.
\newblock Diffusion processes with continuous coefficients. {I}.
\newblock {\em Comm. Pure Appl. Math.}, 22:345--400, 1969.

\bibitem{StVa2}
Daniel~W. Stroock and S.~R.~S. Varadhan.
\newblock Diffusion processes with continuous coefficients. {II}.
\newblock {\em Comm. Pure Appl. Math.}, 22:479--530, 1969.

\bibitem{StVa}
Daniel~W. Stroock and S.~R.~Srinivasa Varadhan.
\newblock {\em Multidimensional diffusion processes}, volume 233 of {\em
  Grundlehren der Mathematischen Wissenschaften [Fundamental Principles of
  Mathematical Sciences]}.
\newblock Springer-Verlag, Berlin, 1979.

\end{thebibliography}

\end{document}